\newskip\procskipamount
\newskip\interskipamount
\newskip\refskipamount
\newcommand{\procskip}{\vskip\procskipamount}
\newcommand{\interskip}{\vskip\interskipamount}
\newcommand{\refskip}{\vskip\refskipamount}
\newcommand{\procbreak}{\par
   \ifdim\lastskip<\procskipamount\removelastskip
   \penalty-100
   \procskip\fi
   \noindent\ignorespaces}
\newcommand{\titlebreak}{\par%
\ifdim\lastskip<\interskipamount\removelastskip%
\penalty10000%
\interskip\fi%
\noindent}%
\newcommand{\interbreak}{\par%
\ifdim\lastskip<\interskipamount\removelastskip%
\penalty-100%
\interskip\fi%
\noindent\ignorespaces}%
\newcommand{\refbreak}{\par%
\ifdim\lastskip<\refskipamount\removelastskip%
\penalty-100%
\refskip\fi%
\noindent\ignorespaces}%
\theoremstyle{plain}
\newtheorem{theorem}{Theorem}[section]
\newtheorem{lemma}[theorem]{Lemma}
\newtheorem{corollary}[theorem]{Corollary}
\newtheorem{proposition}[theorem]{Proposition}
\newtheorem{intro-theorem}{Theorem}
\newtheorem{intro-corollary}[intro-theorem]{Corollary}
\newtheorem{intro-proposition}[intro-theorem]{Proposition}
\theoremstyle{definition}
\newtheorem{definition}[theorem]{Definition}
\newtheorem{remark}[theorem]{Remark}
\newtheorem{example}[theorem]{Example}
\numberwithin{equation}{subsection}
\newcommand{\marginrule}{\marginpar{\rule[-10.5mm]{1mm}{10mm}}}
\newcounter{refcounter}
\newcounter{listcounter}
\newcounter{deflistcounter}
\newcounter{equivcounter}
\newskip{\itemsepamount}
\newskip{\topsepamount}
\newenvironment{assertionlist}{%
  \begin{list}
    {\upshape (\arabic{listcounter})}
    {\setlength{\leftmargin}{18pt}
     \setlength{\rightmargin}{0pt}
     \setlength{\itemindent}{0pt}
     \setlength{\labelsep}{5pt}
     \setlength{\labelwidth}{13pt}
     \setlength{\listparindent}{\parindent}
     \setlength{\parsep}{0pt}
     \setlength{\itemsep}{\itemsepamount}
     \setlength{\topsep}{\topsepamount}
     \usecounter{listcounter}}}
  {\end{list}}
\newenvironment{definitionlist}{%
  \begin{list}
    {\upshape (\alph{deflistcounter})}
    {\setlength{\leftmargin}{18pt}
     \setlength{\rightmargin}{0pt}
     \setlength{\itemindent}{0pt}
     \setlength{\labelsep}{5pt}
     \setlength{\labelwidth}{13pt}
     \setlength{\listparindent}{\parindent}
     \setlength{\parsep}{0pt}
     \setlength{\itemsep}{\itemsepamount}
     \setlength{\topsep}{\topsepamount}
     \usecounter{deflistcounter}}}
  {\end{list}}
\newenvironment{equivlist}{%
  \begin{list}
    {\upshape (\roman{equivcounter})}
    {\setlength{\leftmargin}{18pt}
     \setlength{\rightmargin}{0pt}
     \setlength{\itemindent}{0pt}
     \setlength{\labelsep}{5pt}
     \setlength{\labelwidth}{13pt}
     \setlength{\listparindent}{\parindent}
     \setlength{\parsep}{0pt}
     \setlength{\itemsep}{\itemsepamount}
     \setlength{\topsep}{\topsepamount}
     \usecounter{equivcounter}}}
  {\end{list}}
\renewcommand{\AA}{\mathbb{A}}
\newcommand{\BB}{\mathbb{B}}
\newcommand{\CC}{\mathbb{C}}
\newcommand{\DD}{\mathbb{D}}
\newcommand{\FF}{\mathbb{F}}
\newcommand{\GG}{\mathbb{G}}
\newcommand{\II}{\mathbb{I}}
\newcommand{\QQ}{\mathbb{Q}}
\newcommand{\WW}{\mathbb{W}}
\newcommand{\ZZ}{\mathbb{Z}}
\renewcommand{\hbar}{\bar{h}}
\newcommand{\kgbar}{{\bar{\kappa}}}
\newcommand{\FFbar}{\overline{\FF}}
\newcommand{\QQbar}{\overline{\QQ}}
\newcommand{\Tbf}{{\bf T}}
\newcommand{\Acal}{{\mathcal A}}
\newcommand{\Bcal}{{\mathcal B}}
\newcommand{\Ncal}{{\mathcal N}}
\newcommand{\Pcal}{{\mathcal P}}
\newcommand{\Ascr}{{\mathscr A}}
\newcommand{\Dscr}{{\mathscr D}}
\newcommand{\Gscr}{{\mathscr G}}
\newcommand{\Hscr}{{\mathscr H}}
\newcommand{\Oscr}{{\mathscr O}}
\newcommand{\Xscr}{{\mathscr X}}
\newcommand{\gtilde}{\tilde{g}}
\newcommand{\xtilde}{\tilde{x}}
\newcommand{\Ctilde}{\tilde{C}}
\newcommand{\Dtilde}{\tilde{D}}
\newcommand{\Ftilde}{\tilde{F}}
\newcommand{\Gtilde}{\tilde{G}}
\newcommand{\Ptilde}{\tilde{P}}
\newcommand{\Qtilde}{\tilde{Q}}
\newcommand{\Ttilde}{\tilde{T}}
\newcommand{\eps}{\varepsilon}
\DeclareMathOperator{\DR}{DR}
\DeclareMathOperator{\Frob}{Frob}
\DeclareMathOperator{\Gal}{Gal}
\DeclareMathOperator{\GSp}{GSp}
\DeclareMathOperator{\Hom}{Hom}
\DeclareMathOperator{\Ker}{Ker}
\renewcommand{\mod}[1]{\allowbreak \mkern5mu {\rm mod}\,\,#1}
\newcommand{\One}{\hbox{\rm1\kern-2.3ptl}}
\DeclareMathOperator{\Par}{Par}
\DeclareMathOperator{\Spec}{Spec}
\DeclareMathOperator{\Stab}{Stab}
\newcommand\addots{\mathinner{\mkern1mu\raise0pt\vbox{\kern7pt\hbox{.}}\mkern2mu\raise3pt\hbox{.}\mkern2mu\raise6pt\hbox{.}\mkern1mu}}
\newcommand{\bs}{\backslash}
\newcommand{\doubleexp}[3]{\leftexp{#1}{#2}{\vphantom{#2}}^{#3}}
\newcommand\eqann[1]{\mathrel{\smash{\overset{\ #1\ }{=}}}}
\newcommand{\leftexp}[2]{{\vphantom{#2}}^{#1}{#2}}
\newcommand{\lrangle}{{\langle\ ,\ \rangle}}
\renewcommand{\mod}[1]{\allowbreak \mkern5mu {\rm mod}\,\,#1}
\newcommand{\restricted}[1]{{}_{\vert}{}_{#1}}
\newcommand{\rstr}[1]{\restricted{#1}}
\newcommand{\set}[2]{\{\,#1\ ;\  #2\,\}}
\renewcommand{\star}{{}^*}
\newcommand{\vdual}{{}^{\vee}}
\newcommand\varto[1]{\mathrel{\hbox to #1pt{\rightarrowfill}}}
\newcommand\vartoover[2]{\mathrel{\smash{\overset{#2}{\varto{#1}}}}}
\newcommand{\mono}{\hookrightarrow}
\newcommand{\sends}{\mapsto}
\newcommand{\iso}{\overset{\sim}{\to}}
\newcommand{\APlus}{(A,\iota,\lambda,\eta)}
\newcommand{\AStrat}[1]{{\vphantom{\Ascr}}^{#1}\negmedspace{\Ascr}}
\newcommand{\DMod}{\Dscr{-}{\tt Mod}}
\numberwithin{equation}{section}
\begin{document}

\title{Bruhat strata for Shimura varieties of PEL type}

\author{Torsten Wedhorn\footnote{Dept. of Mathematics, University of Paderborn, Warburger Str. 100, D-33098 Paderborn, Germany, {\tt wedhorn@math.uni-paderborn.de}}}

\maketitle


\noindent{\scshape Abstract.\ } We study the Bruhat stratification for Shimura varieties of PEL type. In the Siegel case this stratification is a scheme-theoretic variant of the stratification by the $a$-number. We show that all Bruhat strata are smooth and determine their dimensions. We also prove that the closure of a Bruhat stratum is a union of Bruhat strata and describe which Bruhat strata are contained in the closure of a given Bruhat stratum.

\medskip

\noindent{\scshape MSC} (2010): 14G35, 14K10, 20G40

\noindent{\scshape Key words}: Shimura varieties, Bruhat decomposition, $F$-zips, Newton stratification


\section*{Introduction}

\subsection*{Background}\label{background}

Let $X$ be an abelian variety over a perfect field $k$ of characteristic $p > 0$. An important invariant of $X$ is its $a$-number $a(X) := \dim_{k}\Hom(\alpha_p,X)$, where $\alpha_p := \Ker(\Frob\colon \GG_a \to \GG_a)$. One always has $0 \leq a(X) \leq \dim(X)$ with $a(X) = 0$ (resp.~$a(X) = \dim(X)$) if and only if $X$ is an ordinary abelian variety (resp.~$X$ is a superspecial abelian variety). The stratification by $a$-number on moduli spaces of polarized abelian varieties has been studied intensively for instance by G.~van der Geer in \cite{vdG_CyclesAV}. F.~Oort has shown that in every non-ordinary Newton stratum of the moduli spaces of principally polarized abelian varieties the stratum where the $a$-number is equal to $1$ is open and dense in that Newton stratum. He then used this fact to prove a conjecture by Grothendieck which says that the closure of Newton stratum corresponding to a (concave) Newton polygon $\nu$ is the union of those Newton strata corresponding to the Newton polygons lying below $\nu$ (\cite{Oort3}).

\subsection*{Main results}

In this paper we study the Bruhat stratification introduced in \cite{Wd_Bruhat} for good reductions of Shimura varieties of PEL type. These good reductions are still moduli spaces of abelian varieties with additional structures. We show in Example~\ref{Siegel} that for the Siegel case (i.e. for the moduli space of principally polarized abelian varieties in characterstic $p > 0$) the Bruhat stratification is a scheme-theoretic variant of the stratification by the $a$-number.

In general, the Bruhat stratification is a decomposition of the special fiber $\Ascr_0$ of the PEL-moduli space into locally closed subspaces. These subspaces can roughly be described as follows. The first de Rham homology $H_1^{\rm DR}(\Xscr/\Ascr_0)$ of the universal abelian scheme $\Xscr$ over $\Ascr_0$ with all its additional structures is endowed with two local direct summands, the Hodge filtration and the conjugate filtration (see \eqref{HodgeFil} for a precise definition). The Bruhat strata are then the loci, where these two filtrations with all their additional structures are in a fixed relative position. In the case of the moduli space of principally polarized abelian varieties of dimension $g$, $H_1^{\rm DR}(\Xscr/\Ascr_0)$ is a locally free module of rank $2g$ endowed with a symplectic pairing given by the polarization, and the Hodge $C$ and the conjugate filtration $D$ are totally isotropic local direct summands of $H_1^{\rm DR}(\Xscr/\Ascr_0)$ of rank $g$. Then the Bruhat strata are simply the locally closed subspaces $\AStrat{a}$, where $C + D$ is a local direct summand of $H_1^{\rm DR}(\Xscr/\Ascr_0)$ of some fixed rank $2g - a$. Moreover $\AStrat{a}(\FFbar_p)$ consists of those principally polarized abelian varieties $(X,\lambda)$ over $\FFbar_p$ with $a(X) = a$.

In general, the Bruhat stratification is indexed as follows. Let $\GG$ be the reductive group over $\QQ$ of the PEL-Shimura datum. As we assume that the Shimura variety has good reduction at $p$, the group $\GG_{\QQ_p}$ has a reductive model $\Gtilde$ over $\ZZ_p$. Let $G/\FF_p$ be its special fiber and denote by $(W,I)$ its Weyl group together with its set of simple reflections. Let $[\mu]$ be the minuscule conjugacy class of cocharacters of $G_{\FFbar_p}$ given by the Shimura datum. It determines a conjugacy class of parabolic subgroups of $G_{\FFbar_p}$ and hence a subset $J \subseteq I$ (see~\eqref{DefineJ} for its precise definition). The (geometric) Frobenius acts on $(W,I)$ via an automorphism $\bar\varphi$ of Coxeter systems. We set $K := \bar\varphi(J)^{\rm opp}$, where $(\ )^{\rm opp}$ denotes the opposite type. The field of definition of $J$ is the finite extension $\kappa$ of $\FF_p$ over which the special fiber $\Ascr_0$ of the PEL-moduli space is defined. Let $\Gamma_J$ be the Galois group of $\kappa$.

We let $\doubleexp{J}{W}{K}$ be the set of elements in $w \in W$ that are of minimal length in their double coset $W_JwW_K$, where $W_J$ and $W_K$ are the subgroups of $W$ generated by $J$ resp.~$K$. Then the Galois group $\Gamma_J$ acts on $\doubleexp{J}{W}{K}$ and the Bruhat stratification is a decomposition
\[
\Ascr_0 = \bigcup_{[x] \in \Gamma_J\bs\doubleexp{J}{W}{K}}\AStrat{[x]}
\]
into locally closed subspaces. Our main result is the following (Corollary~\ref{AStratProp}):

\begin{intro-theorem}\label{MainThm1}
The Bruhat strata $\AStrat{[x]}$ are smooth of dimension $\ell(x^{J,K})$ (see \eqref{DefEllJK} for the definition of $x^{J,K}$). In particular all Bruhat strata are non-empty. The closure of a Bruhat stratum is given by
\[
\overline{\AStrat{[x]}} = \bigcup_{[x'] \leq [x]}\AStrat{[x']},
\]
where $\leq$ denotes the partial order on $\Gamma_J\bs\doubleexp{J}{W}{K}$ induced by the Bruhat order on the Coxeter group $W$.
\end{intro-theorem}

In the Siegel case we show in Example~\ref{BruhatSiegel} that one reobtains the known formulas for the dimension of the $a$-number strata (e.g., see \cite{vdG_CyclesAV}).

There are two essential tools to prove Theorem~\ref{MainThm1}. The first is the description of the Bruhat stack $\Bcal_{J,K}$ \eqref{DefBruhatStack} obtained in \cite{Wd_Bruhat} and briefly recalled in the beginning of Section~2. The second tool is to show that the morphism
\[
a\colon \Ascr_0 \to \Bcal_{J,K},
\]
which sends an abelian variety $X$ with additional structure to the triple consisting of $H_1^{\rm DR}(X)$ (with its additional structure), the Hodge filtration, and the conjugate filtration, is a smooth morphism (Theorem~\ref{ASmooth}).

In \cite{ViWd} Viehmann and the author defined and studied the Ekedahl-Oort stratification for Shimura varieties of PEL type. By construction every Bruhat stratum is a union of Ekedahl-Oort strata. Using the results of \cite{Wd_Bruhat} we describe in Proposition~\ref{EOaStrata} which Ekedahl-Oort strata are contained in a given Bruhat stratum. We deduce the following result (Theorem~\ref{MuOrdAStrat}):

\begin{intro-theorem}\label{MainThm2}
There exists a unique open Bruhat stratum $\AStrat{[\xtilde]}$. It is dense in $\Ascr_0$. Moreover the following assertions are equivalent.
\begin{equivlist}
\item
$\AStrat{[\xtilde]}$ is equal to the generic Newton stratum.
\item
The field of definition $\kappa$ of $J$ is equal to $\FF_p$.
\item
There exists an abelian variety $X$ with additional structure in $\Ascr_0(\FFbar_p)$ which is ordinary.
\end{equivlist}
\end{intro-theorem}


\subsection*{Contents of the paper}

In Section~\ref{PEL} we introduce the PEL-moduli spaces and all other notations. We also very briefly recall the Newton stratification. In Section~\ref{BRUHAT} we define the Bruhat stratification of the special fiber of the PEL-Moduli space and show that it generalizes the $a$-number stratification in the Siegel case.

We compare the Ekedahl-Oort and the Bruhat stratification and deduce Theorem~\ref{MainThm2} in Section~\ref{EOAndAStrat}. Section~\ref{PROP} contains Theorem~\ref{MainThm1} and its proof using a general result about deformations of opposite parabolic subgroups. This result is shown in Section~\ref{PAROPP}.

\noindent{\scshape Acknowledgements}.\ I am grateful to the referee for his/her careful reading and for the suggestion of Remark~\ref{MaxGeom}.


\section{Moduli spaces of PEL type with good reduction}\label{PEL}
Let $S$ be a scheme over~$\FF_p$ and let $q$ be a power of $p$. The pullback of a scheme or a sheaf or a morphism over $S$ under the $q^{\rm th}$-power Frobenius morphism $S \to S$ is denoted by $(\ )^{(q)}$. 

In this section we recall the notion of Shimura-PEL-data and their attached moduli spaces. Our main references are Kottwitz~\cite{Ko_ShFin} and Rapoport and Zink~\cite{RZ_Period}. 

\subsection*{The PEL moduli space $\Ascr$}
Let $\Dscr = \bigl(\BB,\star,V,\lrangle, O_{\BB}, \Lambda, h\bigr)$ denote an integral Shimura-PEL-datum that is unramified at a prime~$p > 0$ in the sense of~\cite{ViWd}~Section~1.1. Let $\GG$ be the associated reductive group over~$\QQ$, and denote by $[\mu]$ the associated conjugacy class of cocharacters of~$\GG$. We assume that $\GG$ is connected, i.e., we exclude the case (D) of \cite{ViWd}~Remark~1.1.

Let $\bar \QQ$ be the algebraic closure of $\QQ$ in $\CC$. Then $[\mu]$ is already defined over $\bar \QQ$. Let $E$ be the reflex field associated with $\Dscr$, i.e.~the field of definition of~$[\mu]$. It is a finite extension of~$\QQ$ contained in $\bar \QQ$. Let $\bar \QQ_p$ be an algebraic closure and fix an embedding $\bar \QQ \mono \bar \QQ_p$. This determines a place $v$ of $E$ over $p$. Let $E_v \subseteq \QQbar_p$ be the $v$-adic completion of $E$, $O_{E_v}$ its ring of integers, and let $\kappa$ be its residue field. The assumption on $\Dscr$ to be unramified implies that $E_v$ is an unramified extension of $\QQ_p$. Let $\kgbar$ be the residue field of the ring of integers of $\QQbar_p$. This is an algebraic closure of $\kappa$.

Let $\AA^p_f$ be the ring of finite adeles over $\QQ$ with trivial $p$-th component and let $C^p \subset \GG(\AA^p_f)$ be a compact open subgroup. We denote by $\Ascr = \Ascr_{\Dscr,C^p}$ the moduli space defined by Kottwitz~\cite{Ko_ShFin} \S5 (see also \cite{ViWd}~Section~1).

Then $\Ascr$ is an algebraic Deligne-Mumford stack which is smooth over $O_{E_v}$. If $C^p$ is sufficiently small, $\Ascr$ is representable by a smooth quasi-projective scheme over $O_{E_v}$ (see loc.~cit.\ or \cite{Lan}~1.4.1.11 and~1.4.1.13). We denote its special fiber by
\[
\Ascr_0 = \Ascr_{\Dscr,C^p,0} := \Ascr_{\Dscr,C^p} \otimes_{O_{E_v}} \kappa.
\]

\subsection*{Group theoretical notation}
Let $\Gtilde$ be the $\ZZ_p$-group scheme of $O_{\BB}$-linear symplectic similitudes of $\Lambda$. This is a reductive group scheme over $\ZZ_p$ whose generic fiber is $\GG_{\QQ_p}$. We denote by $G$ its special fiber. This is a connected reductive group over $\FF_p$ and hence quasi-split (as any reductive group over a finite field). We fix a maximal torus $T$ of $G$ and a Borel subgroup $B$ of $G$ containing $T$ (both defined over $\FF_p$). For every $\FF_p$-algebra $R$ we set
\[
G_R := G \otimes_{\FF_p} R, \qquad T_R := T \otimes_{\FF_p} R, \qquad B_R := B \otimes_{\FF_p} R
\]
We denote by $X^*(T)$ (resp.~$X_*(T)$) the group of characters (resp.~of cocharacters) of $T_{\kgbar}$. Let $(W,I)$ be the Weyl group together with its set of simple reflections of $(G,B,T)$.

We denote by $w_0 \in W$ the element of maximal length. If $\Phi \subset X^*(T)$ is the set of roots of $(G,T)$ and $\Delta \subset \Phi$ the root basis corresponding to $B$, then there exists a unique involution $\iota$ of $\Phi$ such that $\iota(\alpha) = -w_0(\alpha)$ for $\alpha \in \Phi$. It preserves $\Delta$. The induced involution of the Coxeter system $(W,I)$, given by $s_{\alpha} \to s_{\iota(\alpha)}$, where $s_{\alpha} \in W$ is the reflection corresponding to $\alpha \in \Phi$, is the conjugation with $w_0$. If $S \subseteq W$ is a subset, we denote by $S^{\rm opp} := \leftexp{w_0}{S}$ the image of $S$ under this involution.

The Galois group $\Gamma := \Gal(\kgbar/\FF_p)$ acts on $X^*(T)$, on $X_*(T)$, and on the Coxeter system $(W,I)$. It is topologically generated by the geometric Frobenius automorphism $\sigma \in \Gamma$ (i.e. $\sigma^{-1}$ is the automorphism $x \sends x^p$ of $\kgbar$). The automorphism of the Coxeter systems $(W,I)$ induced by $\sigma$ is denoted by $\bar\varphi$. Note that $\bar\varphi$ is also the automorphism induced by the geometric Frobenius $\varphi\colon G_{\FF_p} \to G_{\FF_p}$.

For any subsets $J,K \subseteq I$, we denote by $W_J$ the subgroup of $W$ generated by $J$ and by $\leftexp{J}{W}$ (resp.\ $W^{K}$, resp.\ $\doubleexp{J}{W}{K}$) the set of $w \in W$ that are of minimal length in the left coset $W_Jw$ (resp.\ in the right coset $wW_{K}$, resp.\ in the double coset $W_JwW_{K}$). Then $\doubleexp{J}{W}{K} = \leftexp{J}{W} \cap W^K$.

The Coxeter group is endowed with the Bruhat order which we denote ``$\leq$'': For $x, w\in W$ we have $x\leq w$ if and only if for some (or, equivalently, any) reduced expression $w=s_1\cdots s_n$ as a product of simple reflections $s_i\in I$, one gets a reduced expression for $x$ by removing certain $s_i$ from this product. The minimal number $\ell$ needed to express $w \in W$ as product of $\ell$ simple reflections is the \emph{length of $w$} and denoted $\ell(w)$.

For $J \subseteq I$ let $\kappa(J)$ be its field of definition, i.e. $\kappa(J)$ is the finite extension of $\FF_p$ in $\kgbar$ such that
\begin{equation}\label{DefGammaJ}
\Gamma_J := \Gal(\kgbar/\kappa(J)) = \set{\gamma \in \Gamma}{\gamma(J) = J}
\end{equation}
Let $P_J \subseteq G_{\kappa(J)}$ be the unique parabolic subgroup of type $J$ containing $B$, and let $\Par_J = G_{\kappa(J)}/P_J$ be the projective $\kappa(J)$-scheme of parabolics of type $J$ of $G$.

Let $\Ttilde$ be a maximal torus of $\Gtilde$ such that $\Ttilde \otimes_{\ZZ_p} \FF_p = T$ (such a $\Ttilde$ always exists because the scheme of maximal tori of $\Gtilde$ is smooth, \cite{SGA3}~Exp.~XV, 8.15) and let $\Tbf$ be its generic fiber. Let $\QQ^{\rm nr}_p$ denote the maximal unramified extension of $\QQ_p$ in $\QQbar_p$ and identify $\Gal(\QQ^{\rm nr}_p/\QQ_p)$ with $\Gamma$. Then the cocharacter group $X_*(\Tbf \otimes_{\QQ_p} \QQ^{\rm nr}_p)$ is isomorphic to $X_*(T)$ as $\Gamma$-modules. Every element in the conjugacy class $[\mu]$ is conjugate via some element in $\Gtilde(\QQ^{\rm nr}_p)$ to an element of $X_*(\Tbf \otimes_{\QQ_p} \QQ^{\rm nr}_p)$ which is unique up to conjugation with an element of the normalizer of $\Tbf \otimes_{\QQ_p} \QQ^{\rm nr}_p$. Thus via the identification $X_*(\Tbf \otimes_{\QQ_p} \QQ^{\rm nr}_p) = X_*(T)$ we may consider the conjugacy class $[\mu]$ as an element of $X_*(T)/W$.

Let $\mu$ be the $B$-dominant representative in $X_*(T)$ in the conjugacy class $[\mu]$. As $G$ is quasi-split, it is defined over the field of definition of its conjugacy class, i.e. over $\kappa$. We set
\begin{equation}\label{DefineJ}
J := \set{i \in I}{\langle \mu, \alpha_i\rangle = 0},
\end{equation}
where $\alpha_i \in X^*(T)$ is the simple root corresponding to $i \in I$. We also set
\begin{equation}\label{DefineK}
K := \leftexp{w_0}{\bar\varphi(J)} = \bar\varphi(J)^{\rm opp}.
\end{equation}
As $\kappa$ is the field of definition of $[\mu]$, one has for the fields of definitions of $J$ and $K$
\[
\kappa(J) = \kappa(K) = \kappa.
\]


\subsection*{The Newton stratification and the $\mu$-ordinary locus}
As in~\cite{ViWd}~Section~7.2 we denote by
\[
{\rm Nt}\colon \Ascr_0 \to B(\GG_{\QQ_p},\mu)
\]
the map that sends each point $s$ of $\Ascr_0$ to the isogeny class of the $p$-divisible group with $\Dscr$-structure given by a geometric point lying over $s$. For $b \in B(\GG_{\QQ_p},\mu)$ we denote by $\Ncal_b := {\rm Nt}^{-1}(b)$ the corresponding Newton stratum.

The set $B(\GG_{\QQ_p},\mu)$ is finite and partially ordered. It contains a unique maximal element $b_\mu$ and the corresponding Newton stratum $\Ncal_\mu := \Ncal_{b_\mu}$ is called the \emph{$\mu$-ordinary Newton stratum}. It is open and dense in $\Ascr_0$ by the main result of~\cite{Wd_Ord}.


\section{Definition of the Bruhat stratification}\label{BRUHAT}

In \cite{ViWd}~2.1 Viehmann and the author constructed a morphism $\zeta$ from $\Ascr_0$ to the stack of so-called $\Dscr$-zips which we identified with the stack of $G$-zips of type $\mu$ (in the sense of \cite{PWZ2}) in \cite{ViWd}~4. The corresponding zip stratification (\cite{PWZ2}~(3.27) or \cite{Wd_Bruhat}~Definition~2.8) is nothing but the Ekedahl-Oort stratification. As explained in \cite{Wd_Bruhat} we also obtain a Bruhat stratification by composing $\zeta$ with the canonical morphism from the stack of $G$-zips of type $\mu$ into the Bruhat stack
\begin{equation}\label{DefBruhatStack}
\Bcal_{J,K} := [G_{\kappa}\bs (\Par_J \times \Par_K)]
\end{equation}
defined in \cite{Wd_Bruhat}. Here $[G \bs X]$ denotes the quotient stack over a scheme $S$ if a group scheme $G$ over $S$ acts from the left on a scheme $X$ over $S$. We denote this composition by
\[
a\colon \Ascr_0 \to \Bcal_{J,K}.
\]
Below we will give a more direct description of the morphism $a$.

We recall the properties of $\Bcal_{J,K}$ shown in \cite{Wd_Bruhat} (note that the notation here is considerably simplier because $G$ is by hypothesis connected).
\begin{assertionlist}
\item
$\Bcal_{J,K}$ is a smooth algebraic stack of finite type over $\kappa$. It is of dimension $\dim(G) - \dim(P_J) - \dim(P_K)$ over $\kappa$. Note that with $K$ defined as in \eqref{DefineK} we have $\dim P_J = \dim P_K$ and hence $\dim \Bcal_{J,K} = - \dim(L_J)$, where $L_J$ is a Levi subgroup of $P_J$.
\item
To describe its underlying topological space recall that one may endow any partially ordered set $X$ with a topology by defining the open sets as those subsets $U$ of $X$ such that for all $u \in U$ and $x \in X$ with $x \geq u$ one has $x \in U$. This defines a fully faithful functor from the category of partially open sets (morphisms are order preserving maps) to the category of topological spaces.

Then the underlying topological space of $\Bcal_{J,K}$ can be identified with the space attached to the partially ordered set $\Gamma_J\bs \doubleexp{J}{W}{K}$. Here $\doubleexp{J}{W}{K}$ carries the Bruhat order which is preserved by the action by $\Gamma_J$, and $\Gamma_J\bs \doubleexp{J}{W}{K}$ is endowed with the induced order.

Over the algebraic closure, the underlying topological space of $\Bcal_{J,K} \otimes_{\kappa} \kgbar$ is given by the partially ordered set $\doubleexp{J}{W}{K}$ and the projection $\Bcal_{J,K} \otimes_{\kappa} \kgbar \to \Bcal_{J,K}$ induces the canonical map $\doubleexp{J}{W}{K} \to \Gamma_J\bs \doubleexp{J}{W}{K}$ on underlying topological spaces.
\item 
For each $x \in \doubleexp{J}{W}{K}$ consider the reduced locally closed substack $\Gscr_x$ of $\Bcal_{J,K} \otimes \kgbar$ consisting only of the point $x$. It is already defined over $\kappa(x)$ and it is smooth of dimension $\dim(P_J x P_K) - \dim P_K - \dim P_J = \ell_{J,K}(x) - \dim P_K$, where
\[
\ell_{J,K}(x) := \ell(x) + \ell(w_{0,K}) - \ell(w_{0,J_w}).
\]
Here $w_{0,K}$ denotes the element of maximal length in $W_K$ and $J_w := K \cap w^{-1}Jw$.
\end{assertionlist}

\subsection*{$\Dscr$-module structure on the De Rham homology}
Let $S$ be a $\kappa$-scheme and let $(A,\iota,\lambda,\eta)$ be an $S$-valued point of~$\Ascr_0$. Let $\DD(A[p])$ be the \emph{covariant} Dieudonn\'e crystal of the $p$-torsion and let $M(A)$ be its evaluation in the trivial PD-thickening $(S,S,0)$. Then there is an $\Oscr_S$-linear functorial isomorphism $M(A) \cong H_1^{\DR}(A/S)$, where the De Rham homology is defined as the $\Oscr_S$-linear dual of the De Rham cohomology $H^1_{\DR}(A/S)$ (\cite{BBM_DieudII}~Prop.~2.5.8). Thus $M(A)$ is a locally free $\Oscr_S$-module of rank $\dim_{\QQ}(V)$. Via functoriality it is endowed with an $O_\BB$-action (induced by $\iota$) and with a similitude class of perfect alternating forms (induced by $\lambda$), i.e. $M(A)$ is equipped with the structure of a $\Dscr$-module in the following sense.

\begin{definition}\label{DefDModule}
Let $T$ be a $\ZZ_p$-scheme. A \emph{$\Dscr$-module over $T$} is a locally free $\Oscr_T$-module $M$ of rank $\dim_{\QQ}(V)$ endowed with an $O_\BB$-action and the similitude class of a symplectic form $\lrangle$ such that $\langle bm,m' \rangle = \langle m,b^*m'\rangle$ for all $b \in O_\BB$ and local sections $m,m'$ of $M$.

An \emph{isomorphism $M_1 \iso M_2$ of $\Dscr$-modules} over $T$ is an $\Oscr_T \otimes O_\BB$-linear symplectic similitude.
\end{definition}

Here we call two perfect pairings $\lrangle_1$ and $\lrangle_2$ on a finite locally free $\Oscr_T$-module $M$ \emph{similar} if there exists an open affine covering $T = \bigcup_j V_j$ and for all $j$ a unit $c_j \in \Gamma(V_j,\Oscr_{V_j}^{\times})$ such that $\langle m,m' \rangle_2 = c_j\langle m,m' \rangle_1$ for all $m,m' \in \Gamma(V_j,M)$.

For a morphism $f\colon T' \to T$ of $\ZZ_p$-schemes and a $\Dscr$-module $M$ there is the obvious notion of a pull back $f^*M = M_{T'}$ of $M$ to a $\Dscr$-module on $T'$. Altogether we obtain the category $\widetilde{\DMod}$ of $\Dscr$-modules fibered over the category of $\ZZ_p$-schemes. As descent for finite locally free modules is effective for fpqc-morphisms, $\widetilde{\DMod}$ is a stack for the fpqc topology. In fact it is the classifying stack of $\Gtilde$:

\begin{proposition}\label{StackDMod}
$\widetilde{\DMod} = [\Gtilde \backslash \Spec(\ZZ_p)]$.
\end{proposition}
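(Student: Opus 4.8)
The plan is to exhibit $\widetilde{\DMod}$ as the neutral gerbe banded by $\Gtilde$, with neutralizing object the standard $\Dscr$-module, and thereby identify it with the classifying stack $[\Gtilde\bs\Spec(\ZZ_p)]$ of $\Gtilde$-torsors for the fppf topology (which, since $\Gtilde$ is affine, flat and of finite presentation, agrees with the fpqc stack structure noted just before the statement). Over a $\ZZ_p$-scheme $T$ I write $\Lambda_T := \Lambda\otimes_{\ZZ_p}\Oscr_T$, equipped with the $O_\BB$-action and the similitude class of the symplectic form coming from $\Lambda$; this is a $\Dscr$-module over $T$, functorial in $T$, and it will neutralize the gerbe. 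The two functors are the standard ones: from $[\Gtilde\bs\Spec(\ZZ_p)]$ to $\widetilde{\DMod}$, send a $\Gtilde$-torsor $P$ to the contracted product $P\times^{\Gtilde}\Lambda$, which inherits an $O_\BB$-action and a similitude class of symplectic form precisely because $\Gtilde$, by its very definition, acts through $O_\BB$-linear symplectic similitudes; in the other direction, send a $\Dscr$-module $M$ to the sheaf $\underline{\Isom}_{\Dscr}(\Lambda_T,M)$ of $\Dscr$-module isomorphisms. I would verify that these are quasi-inverse once the two essential facts below are in place.

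The first essential fact is that $\underline{\Aut}_{\Dscr}(\Lambda_T)=\Gtilde_T$ as group-valued sheaves on $T$. This is immediate from the definition of $\Gtilde$ as the $\ZZ_p$-group scheme of $O_\BB$-linear symplectic similitudes of $\Lambda$: a $T'$-automorphism of $\Lambda_{T'}$ respecting the $O_\BB$-action and the similitude class of the form is exactly a $T'$-point of $\Gtilde$. Consequently $\underline{\Isom}_{\Dscr}(\Lambda_T,M)$ carries a right action of $\Gtilde$ that is free and transitive wherever the sheaf is non-empty, i.e. it is a pseudo-torsor under $\Gtilde$. It is moreover representable: it is the locally closed subscheme of the affine Hom-bundle $\underline{\Hom}_{\Oscr_T}(\Lambda_T,M)$ cut out by the conditions of being an isomorphism, of being $O_\BB$-linear, and of being a symplectic similitude (the last the condition that the similitude factor is a global unit), hence affine and of finite presentation over $T$.

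The second essential fact, which is the main obstacle, is that every $\Dscr$-module $M$ over $T$ is fppf-locally isomorphic to $\Lambda_T$. Because $\Gtilde$ is smooth over $\ZZ_p$ (it is reductive), the deformations of a $\Dscr$-isomorphism are governed by $\Lie\Gtilde$ and are unobstructed, so the pseudo-torsor $\underline{\Isom}_{\Dscr}(\Lambda_T,M)$ is smooth over $T$; it is therefore an fppf $\Gtilde$-torsor as soon as it is surjective, i.e. as soon as it is non-empty over every geometric point $\Spec\Omega\to T$. This reduces the claim to the assertion that over an algebraically closed field $\Omega$ every $\Dscr$-module is isomorphic to $\Lambda_\Omega$. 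Here the unramifiedness hypothesis enters: $O_\BB\otimes_{\ZZ_p}\Omega$ is a product of matrix algebras over $\Omega$, so by Morita equivalence $M_\Omega$ is governed by a multiplicity datum, and the adjunction $\langle bm,m'\rangle=\langle m,b^{*}m'\rangle$ together with the non-degeneracy of the form and the classification of symplectic and orthogonal forms over $\Omega$ forces this datum to coincide with that of $\Lambda_\Omega$ (the point being that the unramified structure of $O_\BB$ and the compatibility of the positive involution with the symplectic form rigidify the $O_\BB$-module type); hence $M_\Omega\cong\Lambda_\Omega$. This geometric-fibre classification of a semisimple algebra with involution acting on a symplectic space is the technical heart of the argument.

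Granting these, the conclusion is formal: the pseudo-torsor $\underline{\Isom}_{\Dscr}(\Lambda_T,M)$ is an fppf $\Gtilde$-torsor, so $\widetilde{\DMod}$ is a gerbe banded by $\Gtilde$; the global object $\Lambda$ neutralizes it, and the two functors above are mutually quasi-inverse equivalences of stacks over $(\ZZ_p\text{-schemes})$, giving $\widetilde{\DMod}=[\Gtilde\bs\Spec(\ZZ_p)]$. I expect the smoothness and representability bookkeeping of the first fact to be routine, and the only genuinely delicate point to be the geometric-fibre classification in the second fact, where the unramified structure of $O_\BB$ and the interaction of the positive involution with the symplectic form must be used to pin down the isomorphism type.
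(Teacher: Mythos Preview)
Your proposal is correct and follows essentially the same approach as the paper: exhibit $\Lambda$ as a neutralizing object with automorphism group scheme $\Gtilde$, and show that every $\Dscr$-module is locally isomorphic to $\Lambda_T$. The only difference is that where you sketch the local triviality by reducing (via smoothness of $\Gtilde$) to a classification over algebraically closed fields using Morita equivalence and the structure of $(O_\BB,\star)$, the paper simply invokes \cite{RZ_Period}~Theorem~3.16, which gives \'etale local triviality directly; your sketch is essentially an outline of the content behind that citation.
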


Here we endow $\Spec(\ZZ_p)$ with the trivial $\Gtilde$-action.

\begin{proof}
The $\ZZ_p$-module $\Lambda$ together with the induced $O_\BB$-action and the induced pairing is a $\Dscr$-module over $\ZZ_p$. Its automorphism group scheme is $\Gtilde$. Moreover, \cite{RZ_Period}~Theorem~3.16 shows that all $\Dscr$-modules are \'etale locally isomorphic to the $\Dscr$-module $\Lambda_T$. This shows the claim.
\end{proof}

We set
\[
\DMod := \widetilde{\DMod} \otimes_{\ZZ_p} \FF_p = [G \backslash \Spec(\FF_p)].
\]
We will also use the following torsor $\Ascr^{\#}$ over $\Ascr$. For every $O_{E_v}$-scheme~$S$ the $S$-valued points of $\Ascr^{\#}$ are given by quintuples $(A,\iota,\lambda,\eta,\alpha)$, where $(A,\iota,\lambda,\eta) \in \Ascr(S)$ and where $\alpha$ is an $O_\BB$-linear symplectic similitude\break $H_1^{\DR}(A/S) \iso \Lambda \otimes_{\ZZ_p} \Oscr_S$.

By Proposition~\ref{StackDMod}, $\Ascr^{\#}$ is a $\Gtilde_{O_{E_v}}$-torsor over~$\Ascr$ for the \'etale topology. We also set $\Ascr^{\#}_0 := \Ascr^{\#} \otimes_{O_{E,v}} \kappa$.

\subsubsection*{Hodge filtration and conjugate filtration on the De Rham homology}
Now let $(A,\iota,\lambda,\eta,\alpha)$ be an $S$-valued point of $\Ascr^{\#}_0$. Then the first de Rham homology $M(A) = H_1^{\DR}(A/S)$ (i.e., the $\Oscr_S$-linear dual of the first hypercohomology of the relative de Rham complex $\Omega^{\bullet}_{A/S}$) comes with two totally isotropic locally direct summands, namely
\begin{equation}\label{HodgeFil}
C := \omega_{A\vdual} := f\vdual_*\Omega^1_{A\vdual/S}, \qquad D := R^1f_*(\Hscr^0(\Omega^{\bullet}_{A/S}))^{\perp}.
\end{equation}
Here $f\colon A \to S$ (resp.~$f\vdual\colon A\vdual \to S$) denotes the structure morphism of $A$ (resp.~of the dual abelian scheme $A\vdual$).

We define $P$ (resp.~$Q$) to be the stabilizer of $\alpha(C)$ (resp.~$\alpha(D)$) in $G_S$. Then $P$ (resp.~$Q$) is a parabolic subgroup of type $J$ (resp.~$K = \bar\varphi(J)^{\rm opp}$) of $G_S$ by \cite{ViWd}~Prop.~1.13.

As the formation of $C$ and $D$ is functorial, we obtain a morphism
\[
a^{\#}\colon \Ascr_0^{\#} \to \Par_J \times \Par_K.
\]
We endow $\Par_J \times \Par_K$ with the obvious diagonal left action of $G_{\kappa}$ and denote by
\[
\Bcal_{J,K} := \Bcal_{J,K}(G) := [G_{\kappa}\bs (\Par_J \times \Par_K)]
\]
the quotient stack. Then $a^{\#}$ is $G_{\kappa}$-equivariant and hence it induces the morphism
\begin{equation}\label{EqAMorphism}
a\colon \Ascr_0 \to \Bcal_{J,K}.
\end{equation}
Applying \cite{Wd_Bruhat}~Definition~1.13, the morphism $a$ yields the \emph{Bruhat stratification of $\Ascr_0$}, indexed by $\Gamma_J\backslash\doubleexp{J}{W}{K}$, i.e. we obtain a decomposition
\begin{equation}\label{EqBruhat}
\Ascr_0 = \bigcup_{[x] \in \Gamma_J\backslash\doubleexp{J}{W}{K}}\AStrat{[x]}
\end{equation}
of $\Ascr_0$ into locally closed substacks.

\begin{definition}
The locally closed substacks $\AStrat{[x]}$ for $[x] \in \Gamma_J\backslash\doubleexp{K}{W}{J}$ are called the \emph{Bruhat strata of $\Ascr_0$}.
\end{definition}

We also define geometric Bruhat strata as follows. Let $\kappa \subseteq k \subset \kgbar$ be the splitting field of the reductive group $G$. Then $\Gal(\kgbar/k)$ acts trivially on $(W,I)$ and the underlying topological space of $\Bcal_{J,K} \otimes_{\kappa} k$ is $\doubleexp{K}{W}{J}$. The residue gerbe $\Gscr_x$ attached to $x \in \doubleexp{K}{W}{J}$ is a locally closed substack of $\Bcal_{J,K} \otimes_{\kappa} \kappa(x)$, where $\kappa \subseteq \kappa(x) \subseteq k$ is the field of definition of $x$. We set
\begin{equation}\label{DefGeomBruhat}
\AStrat{x} := a^{-1}(\Gscr_{x} \otimes_{\kappa(x)} k) \subseteq \Ascr_0 \otimes_{\kappa} k.
\end{equation}
This is a locally closed substack of $\Ascr_0 \otimes_\kappa k$, called a \emph{geometric Bruhat stratum}. Then one has for $[x] = \Gamma_J\cdot x \in \Gamma_J\backslash\doubleexp{J}{W}{K}$ a decomposition into open and closed substacks
\[
\AStrat{[x]} \otimes_{\kappa} k = \coprod_{z \in \Gamma_J\cdot x}\AStrat{z}.
\]
Altogether we obtain a disjoint decomposition into locally closed substacks
\[
\Ascr_0 \otimes_{\kappa} k = \bigcup_{x \in \doubleexp{K}{W}{J}} \AStrat{x},
\]
which we call the \emph{geometric Bruhat stratification of $\Ascr_0$}.

\begin{example}\label{Siegel}
Consider the Siegel case, i.e., we choose $\BB = \QQ$ in the Shimura-PEL-datum $\Dscr$, and hence $\Gtilde = \GSp(\Lambda,\lrangle)$. Set $g := \dim_\QQ(V)/2$. Let us explain that in this case the Bruhat stratification is nothing but a scheme-theoretic version of the stratification by the $a$-number defined by Oort in~\cite{Oort3}.

The reflex field $E$ is equal to $\QQ$ and hence $\kappa = \FF_p$. Then a $\Dscr$-module over an $\ZZ_p$-scheme $S$ is a locally free $\Oscr_S$-module of rank $2g$ endowed with the similitude class of a symplectic pairing. We have $J = K$ and attaching to a Lagrangian $C$ in the free $\Dscr$-module $\Lambda_S := \Lambda \otimes_{\ZZ_p} \Oscr_S$ (i.e., $C$ is a totally isotropic locally direct summand of rank $g$) its stabilizer in $G_S$ yields a bijection between the set of Lagrangians in $\Lambda_S$ and set of parabolic subgroups of type $J$ of $G_S$.

Let $k$ be a field extension of $\kappa$, $C_i, D_i \subset \Lambda_k$ Lagrangians ($i = 1,2$) and let $P_i = \Stab_{G_k}(C_i)$ and $Q_i = \Stab_{G_k}(D_i)$ be the correpsonding subgroups of type $J$. Then two pairs $(P_1,Q_1)$ and $(P_2,Q_2)$ are in the same $G(k)$-orbit if and only if $\dim_k(C_1 \cap D_1) = \dim_k(C_2 \cap D_2) \in \{0,\dots,g\}$. This shows that in this case we can identify $\doubleexp{J}{W}{J}$ with $\{0,\dots,g\}$ via a map $w \sends n(w)$. Moreover we have $w \leq w'$ (w.r.t. the Bruhat order) if and only if $n(w) \geq n(w')$.

Now assume that $k$ is perfect and let $(A,\lambda,\eta,\alpha)$ be a $k$-valued point of $\Ascr^{\#}_0$. Denote by $\sigma$ the absolute Frobenius on $k$. Thus $\alpha$ is a symplectic similitude $M(A) \cong \Lambda_k := k \otimes_{\ZZ_p} \Lambda$, where the symplectic form on $M(A)$ is induced by $\lambda$ and on $\Lambda_k$ by $\lrangle$. Set $C := \alpha(\omega_{A\vdual})$ and $D := \alpha(R^1f_*(\Hscr^0(\Omega^{\bullet}_{A/S}))^{\perp})$. These are totally isotropic $g$-dimensional subspaces of $\Lambda_k$. Recall that for any $k$-vector space $W$ we set $W^{(p)} = k \otimes_{\sigma,k} W$. Let $F\colon M(A)^{(p)} \to M(A)$, $V\colon M(A) \to M(A)^{(p)}$ be the $k$-linear maps induced by Verschiebung and Frobenius, respectively. By transport of structure via $\alpha$ we obtain $k$-linear maps $F\colon \Lambda_k^{(p)} \to \Lambda_k$ and $V\colon \Lambda_k \to \Lambda_k^{(p)}$ such that $C^{(p)} = V(\Lambda_k)$ and $D = F(\Lambda^{(p)}_k)$ (this is the dual version of \cite{Oda}~5.11, where De Rham cohomology and contravariant Dieudonn\'e theory are considered).

Let $F^{\flat}\colon \Lambda_k \to \Lambda_k$ be the $\sigma$-linear map corresponding to $F$, i.e. $F^{\flat}(x) = F(1 \otimes x)$ for $x \in \Lambda_k$. As $\sigma$ is bijective, the image of $F$ and of $F^{\flat}$ coincide.

We also denote by $V^{\flat}\colon \Lambda_k \to \Lambda_k$ the $\sigma^{-1}$-linear map corresponding to $V$, i.e., $V^{\flat} = \tau \circ V$ with $\tau\colon \Lambda_k^{(p)} \to \Lambda_k$, $(a \otimes x) = \sigma^{-1}(a)x$. On the other hand, as $\Lambda_k$ has an $\FF_p$-rational stucture $\Lambda \otimes_{\ZZ_p} \FF_p$, there are a canonical $k$-linear isomorphism $\gamma\colon \Lambda_k^{(p)} \iso \Lambda_k$ and a $\sigma$-linear map $\sigma_\Lambda\colon \Lambda_k \to \Lambda_k$, $\alpha \otimes z \sends \alpha^p \otimes z$ for $\alpha \in k$ and $z \in \Lambda$. Finally let $\beta\colon \Lambda_k \to \Lambda_k^{(p)}$ the $\sigma$-linear map $x \sends 1 \otimes x$ for $x \in \Lambda_k$. Then
\begin{align}
\sigma_\Lambda \circ \tau &= \gamma,\label{FrobStuff1}\\
\gamma \circ \beta &= \sigma_\Lambda.\label{FrobStuff2}
\end{align}
Thus we find
\[
\sigma_\Lambda(C) \eqann{\eqref{FrobStuff2}} \gamma(\beta(C)) = \gamma(C^{(p)}) = \gamma(V(\Lambda_k)) \eqann{\eqref{FrobStuff1}} \sigma_\Lambda(V^{\flat}(\Lambda_k))
\]
and hence $V^{\flat}(\Lambda_k) = C$.

Therefore $(A,\lambda,\eta)$ is in the Bruhat stratum corresponding to $w \in \doubleexp{J}{W}{J}$ if and only if $\dim_k (F^{\flat}M(A) \cap V^{\flat}M(A)) = n(w)$, i.e. if and only if the point $(A,\lambda,\eta)$ has $a$-number equal to $n(w)$ in the sense of~\cite{Oort3}.
\end{example}

The morphism $a$ induces a continuous map $\Ascr_0(\kgbar) \to \doubleexp{J}{W}{K}$. This can also be expressed as the following semi-continuity result. 

\begin{remark}
As usual endow $\doubleexp{J}{W}{K}$ with the Bruhat order. For all $x \in \doubleexp{J}{W}{K}$ the set
\[
\set{s \in \Ascr_0(\kgbar)}{a(s) \leq x}
\]
is closed. Below (Corollary~\ref{AStratProp}) we will show that this set is the closure of $\set{s \in \Ascr_0(\kgbar)}{a(s) = x}$.

For instance we can apply this in the Siegel case (Example~\ref{Siegel}). There we identified the partially ordered sets $(\doubleexp{J}{W}{K},\leq)$ and $(\{0,\dots,g\},\geq)$. Hence for $d \in \{0,\dots,g\}$ the locus where the $a$-number $\geq d$ is always a closed subset. Moreover, it is the closure of the locus of points where the $a$-number is equal to $d$.
\end{remark}


\section{Ekedahl-Oort strata and Bruhat strata}\label{EOAndAStrat}
The Ekedahl-Oort stratification defined in~\cite{ViWd} is a special case of the zip stratification introduced in \cite{PWZ2}. It is a refinement of the Bruhat stratification and the results of \cite{Wd_Bruhat} allow to give a simple description of the Ekedahl-Oort strata that are contained in a given Bruhat stratum.

For this recall that the Ekedahl-Oort stratification is a decomposition into locally closed substacks
\begin{equation}\label{EqEO}
\Ascr_0 = \bigcup_{[w] \in \Gamma_J\bs\leftexp{J}{W}} \Ascr^{[w]}.
\end{equation}
As above we also have geometric Ekedahl-Oort strata $\Ascr^w$ for $w \in \leftexp{J}{W}$ defined over a splitting field $k$ of $G$. By \cite{ViWd}~Section~6 and Section~9, the $\Ascr^w$ are quasi-affine, smooth of dimension $\ell(w)$, and the closure of an Ekedahl-Oort stratum (resp.~a geometric an Ekedahl-Oort stratum) is the union of Ekedahl-Oort strata (resp.~of geometric Ekedahl-Oort stratum). It suffices to compare geometric Ekedahl-Oort strata and geometric Bruhat strata.

\begin{proposition}\label{EOaStrata}
For $x \in \doubleexp{J}{W}{K}$ one has
\[
\AStrat{x} = \bigcup_{y \in \leftexp{K \cap x^{-1}Jx}{W_K}}\Ascr^{xy}
\]
\end{proposition}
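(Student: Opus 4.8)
The plan is to reduce the assertion to a purely combinatorial computation about double cosets, exploiting that both stratifications are pulled back from algebraic stacks along the same morphism. Recall from Section~\ref{BRUHAT} that the morphism $a$ is by definition the composite of $\zeta$ with the canonical morphism $\pi$ from the stack of $G$-zips of type $\mu$ to $\Bcal_{J,K}$, where $\zeta$ induces the Ekedahl-Oort stratification. By definition the geometric strata $\Ascr^w$ and $\AStrat{x}$ are the $\zeta$-preimages of the zip stratum indexed by $w \in \leftexp{J}{W}$ and of $\pi^{-1}(\Gscr_x)$ with $x \in \doubleexp{J}{W}{K}$, respectively. Hence, as already noted before this proposition, it suffices to compare geometric strata, and for this to determine on underlying topological spaces the fibre of $\pi$ over a point $x \in \doubleexp{J}{W}{K}$; concretely I must identify the set $\set{w \in \leftexp{J}{W}}{\pi(w) = x}$.

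First I would pin down $\pi$ on topological spaces. By construction $\pi$ records the relative position of the Hodge parabolic (of type $J$, the stabilizer of $C$) and the conjugate parabolic (of type $K = \bar\varphi(J)^{\rm opp}$, the stabilizer of $D$). By the description of the stack of $G$-zips and of $\Bcal_{J,K}$ in \cite{Wd_Bruhat} (cf.\ also \cite{PWZ2}), this relative position, for a $G$-zip lying in the zip stratum $w \in \leftexp{J}{W}$, equals $\relpos(P_J, wP_Kw^{-1})$, i.e.\ the unique element of $\doubleexp{J}{W}{K}$ contained in the double coset $W_JwW_K$. Thus $\pi$ sends $w$ to the double-coset representative of $W_JwW_K$, and therefore
\[
\set{w \in \leftexp{J}{W}}{\pi(w) = x} = \leftexp{J}{W} \cap W_JxW_K .
\]
This identification of $\pi$ is the conceptual heart of the argument and the step I expect to be the main obstacle: it is where the geometry of the conjugate filtration and the Frobenius twist built into $K = \bar\varphi(J)^{\rm opp}$ enter, and where the cited description of the Bruhat stack from \cite{Wd_Bruhat} is indispensable.

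It then remains to prove the combinatorial lemma
\[
\leftexp{J}{W} \cap W_JxW_K = \set{xy}{y \in \leftexp{J_x}{W_K}}, \qquad J_x := K \cap x^{-1}Jx ,
\]
where $\leftexp{J_x}{W_K}$ denotes the minimal-length representatives of $W_{J_x}\backslash W_K$. Here I would argue as follows. Since $x \in W^K$ one has $\ell(xy) = \ell(x) + \ell(y)$ for every $y \in W_K$, and by the standard fact (Kilmoyer's theorem) $x^{-1}W_Jx \cap W_K = W_{J_x}$, so that the left cosets contained in $W_JxW_K$ correspond bijectively to $W_{J_x}\backslash W_K$ via $W_{J_x}y \mapsto W_Jxy$. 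To see that $xy \in \leftexp{J}{W}$ for $y \in \leftexp{J_x}{W_K}$, I would check that $(xy)^{-1}$ maps every simple root $\alpha \in \Delta_J$ to a positive root: as $x \in \leftexp{J}{W}$ the root $x^{-1}(\alpha)$ is positive, and it lies in $\Phi_K$ only if it lies in $\Phi_{J_x}^{+}$ (again by Kilmoyer, using $x^{-1}\Phi_J \cap \Phi_K = \Phi_{J_x}$); since $y \in W_K$ preserves positivity on $\Phi^{+}\setminus\Phi_K^{+}$ and, being minimal in $W_{J_x}y$, also maps $\Phi_{J_x}^{+}$ into $\Phi^{+}$, the root $y^{-1}x^{-1}(\alpha)$ is positive in either case. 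The reverse inclusion and the uniqueness of the expression follow because each left coset in $W_JxW_K$ contains exactly one element of $\leftexp{J}{W}$, namely its minimal-length representative, which by the preceding computation is of the form $xy$. Combining the three steps yields $\AStrat{x} = \bigcup_{y \in \leftexp{J_x}{W_K}}\Ascr^{xy}$, the indexing set being exactly $\leftexp{K \cap x^{-1}Jx}{W_K}$.
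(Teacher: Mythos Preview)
Your proposal is correct and follows the same two-step strategy as the paper: first identify the map $\pi$ on underlying topological spaces as sending $w \in \leftexp{J}{W}$ to the minimal representative of its $(W_J,W_K)$-double coset (the paper phrases this as ``the unique element of minimal length in $wW_K$'', which for $w \in \leftexp{J}{W}$ is the same thing, and cites \cite{Wd_Bruhat}~Prop.~2.10), and then determine the fibre $\pi^{-1}(x) = \leftexp{J}{W} \cap W_JxW_K$. The only difference is that the paper dispatches the combinatorial identity $\leftexp{J}{W} \cap W_JxW_K = \set{xy}{y \in \leftexp{K \cap x^{-1}Jx}{W_K}}$ by citing Howlett's result via \cite{PWZ1}~Prop.~2.8, whereas you supply a direct root-system argument using Kilmoyer's theorem; your argument is a valid self-contained proof of that cited fact.
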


\begin{proof}
In \cite{Wd_Bruhat}~Proposition~2.10 it is shown that the canonical morphism from the stack of $G$-zips of type $\mu$ to the Bruhat stack $\Bcal_{J,K}$ is given on the underlying topological spaces by attaching to $w \in \leftexp{J}{W}$ the unique element of minimal length in $wW_K$. This is a surjective map
\[
\pi\colon \leftexp{J}{W} \to \doubleexp{J}{W}{K}.
\]
Hence for $x \in \doubleexp{J}{W}{K}$ one has
\begin{equation}
\AStrat{x} = \bigcup_{w \in \pi^{-1}(x)}\Ascr^w,
\end{equation}
But by a corollary of a result of Howlett on Coxeter groups (e.g., \cite{PWZ1} Prop.~2.8) one has
\[
\pi^{-1}(x) = \set{xy}{y \in \leftexp{K \cap x^{-1}Jx}{W_K}}.\qedhere
\]
\end{proof}

\begin{corollary}\label{SingleEO}
For $x \in \doubleexp{J}{W}{K}$ the corresponding Bruhat stratum $\AStrat{x}$ consists of a single Ekedahl-Oort stratum if and only if $K = x^{-1}Jx$.
\end{corollary}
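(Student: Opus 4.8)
The plan is to apply Proposition~\ref{EOaStrata}, which expresses any geometric Bruhat stratum $\AStrat{x}$ as the union of the geometric Ekedahl-Oort strata $\Ascr^{xy}$ over all $y \in \leftexp{K \cap x^{-1}Jx}{W_K}$. Since the Ekedahl-Oort strata are pairwise disjoint and locally closed, $\AStrat{x}$ consists of a single Ekedahl-Oort stratum precisely when this indexing set is a singleton. Thus the statement will reduce to the purely combinatorial claim that the set $\leftexp{K \cap x^{-1}Jx}{W_K}$ consists of a single element if and only if $K = x^{-1}Jx$.

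To carry this out, first I would write $L := K \cap x^{-1}Jx$, so that $L$ is a subset of $K$ and the indexing set is $\leftexp{L}{W_K}$, the set of minimal-length representatives of the left cosets $W_L \backslash W_K$ in the Coxeter group $W_K$. The key observation is that this set of minimal coset representatives is in bijection with the cosets $W_L\backslash W_K$ themselves, so its cardinality equals the index $[W_K : W_L]$. Therefore $\leftexp{L}{W_K}$ is a singleton if and only if $W_L = W_K$, which for subsets of simple reflections is equivalent to $L = K$. The only remaining point is to observe that $K \cap x^{-1}Jx = K$ is equivalent to the inclusion $K \subseteq x^{-1}Jx$, and then to upgrade this inclusion to the equality $K = x^{-1}Jx$ asserted in the statement.

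The main obstacle, such as it is, lies in this last upgrade: from $K \subseteq x^{-1}Jx$ to $K = x^{-1}Jx$. Here I would use that $x \in \doubleexp{J}{W}{K}$ together with the equality of cardinalities $|J| = |K|$, which follows from the definition $K = \bar\varphi(J)^{\rm opp}$ in \eqref{DefineK}, since both $\bar\varphi$ and the opposition involution preserve the cardinality of subsets of $I$. Granting $|J| = |K|$, the inclusion $K \subseteq x^{-1}Jx$ of subsets of $I$ (note that $x^{-1}Jx$ need not a priori be a subset of $I$, so one must argue that the relevant intersection-condition forces it to be, which is exactly the content of Howlett's lemma underlying Proposition~\ref{EOaStrata}) together with $|x^{-1}Jx| = |J| = |K|$ gives $K = x^{-1}Jx$. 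Conversely, if $K = x^{-1}Jx$ then $L = K$ and the indexing set is trivially a singleton, giving the reverse implication immediately.

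In summary, the proof is a short deduction: invoke Proposition~\ref{EOaStrata} to reduce to counting the index $[W_K : W_{K \cap x^{-1}Jx}]$, observe this index is $1$ exactly when $K \cap x^{-1}Jx = K$, and finally use $|J| = |K|$ to convert the resulting inclusion into the equality $K = x^{-1}Jx$.
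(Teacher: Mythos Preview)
Your proof is correct and follows essentially the same approach as the paper: invoke Proposition~\ref{EOaStrata} to reduce to the condition $K \cap x^{-1}Jx = K$, then use $|J| = |K|$ (from $K = \bar\varphi(J)^{\rm opp}$) to upgrade the inclusion $K \subseteq x^{-1}Jx$ to an equality. Your parenthetical concern about whether $x^{-1}Jx$ lies in $I$ is unnecessary for the argument, since the cardinality comparison $|x^{-1}Jx| = |J| = |K|$ already takes place in $W$ and forces equality of the two sets.
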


\begin{proof}
By Proposition~\ref{EOaStrata}, $\AStrat{x}$ consists of a single Ekedahl-Oort stratum if and only $K \cap x^{-1}Jx = K$. As $J$ and $K$ have the same number of elements, this condition is equivalent to $K = x^{-1}Jx$.
\end{proof}

\begin{example}
Assume that $J = K$ (e.g., if $G$ is a group of Dynkin type $C$, because then the element $w_0$ of maximal length is central and $\bar\varphi(J) = J$). Let $1 \in \doubleexp{K}{W}{J}$ be the element of minimal length. Then $\pi^{-1}(1) = \{1\}$ and $\AStrat{1}$ is the superspecial Ekedahl-Oort stratum (in the sense of \cite{ViWd}~Example~4.16).

For $J \ne K$, $\AStrat{1}$ is the union of more than one Ekedahl-Oort stratum.
\end{example}


\subsection*{The maximal Bruhat stratum}

Let $\xtilde \in \doubleexp{J}{W}{K}$ be the element of maximal length. The corresponding geometric Bruhat stratum $\AStrat{\xtilde}$ is called the \emph{maximal Bruhat stratum}.

\begin{remark}\label{MaxGeom}
As the Galois action by $\Gamma_J$ on $\doubleexp{J}{W}{K}$ preserves the length of elements, we have $\AStrat{\xtilde} = \AStrat{[\xtilde]}$, i.e., the maximal Bruhat stratum is already defined over $\kappa$.
\end{remark}

The maximal Bruhat stratum is open and dense in $\Ascr_0$: If we consider $\doubleexp{J}{W}{K}$ as a topological space, the maximality of $\xtilde$ means that $\{\xtilde\}$ is open in $\doubleexp{J}{W}{K}$. Thus $\AStrat{\xtilde}$ is an open in $\Ascr_0$. Moreover it contains by Proposition~\ref{EOaStrata} the $\mu$-ordinary Ekedahl-Oort stratum, i.e. the Ekedahl-Oort stratum corresponding to the maximal element $w_\mu$ in $\leftexp{J}{W}$ (\cite{ViWd}~Example~4.16). By~\cite{ViWd}~Theorem~6.1 the $\mu$-ordinary Ekedahl-Oort stratum is dense in $\Ascr_0$ and thus $\AStrat{\xtilde}$ is dense.
The density of $\AStrat{\xtilde}$ will also follow from Corollary~\ref{AStratProp} below.

In general, $\AStrat{\xtilde}$ is not equal to $\mu$-ordinary Ekedahl-Oort stratum (which is also the $\mu$-ordinary Newton stratum by the first main result of~\cite{Mo_SerreTate}; see also \cite{Wortmann} for a purely group-theoretical proof).

\begin{theorem}\label{MuOrdAStrat}
The following assertions are equivalent.
\begin{equivlist}
\item
The generic Bruhat stratum $\AStrat{\xtilde}$ is equal to the $\mu$-ordinary Newton stratum.
\item
$\bar\varphi(J) = J$.
\item
$E_v = \QQ_p$.
\item
The ordinary locus of $\Ascr_0$ (i.e. the locus of points $\APlus$ where the underlying abelian scheme $A$ is ordinary) is non-empty.
\item
The ordinary locus of $\Ascr_0$ is equal to the $\mu$-ordinary locus.
\end{equivlist}
\end{theorem}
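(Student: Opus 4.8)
The plan is to separate the five conditions into a group-theoretic core, its geometric translation, and the Newton-theoretic consequences, establishing (ii)$\iff$(iii), then (i)$\iff$(ii), and finally (iii)$\iff$(iv)$\iff$(v). The first of these is elementary: since $E_v$ is unramified over $\QQ_p$ with residue field $\kappa$, one has $E_v=\QQ_p$ iff $\kappa=\FF_p$; by the equality $\kappa=\kappa(J)$ recorded after \eqref{DefineK} this holds iff $\Gamma_J=\Gamma$, and as $\Gamma$ is topologically generated by $\sigma$, which acts on $(W,I)$ through $\bar\varphi$, this is exactly $\bar\varphi(J)=J$.

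The heart of the argument is (i)$\iff$(ii). By the discussion preceding the theorem, $\AStrat{\xtilde}$ always contains the $\mu$-ordinary Ekedahl--Oort stratum $\Ascr^{w_\mu}$, which by \cite{Mo_SerreTate} coincides with the $\mu$-ordinary Newton stratum; since Ekedahl--Oort strata are disjoint, (i) holds iff $\AStrat{\xtilde}=\Ascr^{w_\mu}$, i.e.\ iff $\AStrat{\xtilde}$ is a single Ekedahl--Oort stratum. By Corollary~\ref{SingleEO} this is equivalent to $K=\xtilde^{-1}J\xtilde$, so everything reduces to the combinatorial claim $K=\xtilde^{-1}J\xtilde\iff\bar\varphi(J)=J$. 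For the forward implication from $\bar\varphi(J)=J$ I would use $K=\bar\varphi(J)^{\rm opp}=J^{\rm opp}$ and check that $w_0w_{0,K}$ lies in $\doubleexp{J}{W}{K}$ and has length $\ell(w_0)-\ell(w_{0,K})=\ell(w_0)-\ell(w_{0,J})=\ell(w_\mu)$, hence is the longest element $\xtilde$; a direct computation then gives $\xtilde^{-1}J\xtilde=w_{0,K}(w_0Jw_0)w_{0,K}=w_{0,K}Kw_{0,K}=K$. Conversely, $K=\xtilde^{-1}J\xtilde$ forces $\xtilde W_K=W_J\xtilde$, so the double coset $W_Jw_0W_K$ collapses to the single left coset $W_Jw_0$; as $\xtilde\in\leftexp{J}{W}$ is its minimal representative, $\xtilde=w_{0,J}w_0=w_\mu$, and then $\xtilde^{-1}J\xtilde=w_0(w_{0,J}Jw_{0,J})w_0=w_0Jw_0=J^{\rm opp}$, whence $K=J^{\rm opp}$ and $\bar\varphi(J)=J$.

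Finally I would deduce (iii)$\iff$(iv)$\iff$(v) from the Newton stratification. The ordinary Newton point $b_{\rm ord}$ is the global maximum over all Newton points; since every stratum is nonempty (the map ${\rm Nt}$ is surjective onto $B(\GG_{\QQ_p},\mu)$), the ordinary locus is nonempty iff $b_{\rm ord}\in B(\GG_{\QQ_p},\mu)$, and as $b_\mu$ is the maximal element of this set with $b_\mu\le b_{\rm ord}$, this happens iff $b_{\rm ord}=b_\mu$; in that case the ordinary locus is the single Newton stratum $\Ncal_\mu$, giving (v), and the converse is immediate. To match $b_\mu=b_{\rm ord}$ with (iii) I would invoke the description of the $\mu$-ordinary Newton point as the $\sigma$-orbit average of the minuscule $\mu$ (\cite{Wd_Ord}, \cite{Wortmann}): this average acquires a slope strictly between $0$ and $1$ unless $\sigma$ fixes the conjugacy class of $\mu$, i.e.\ unless $\bar\varphi(J)=J$, so $b_\mu$ is ordinary precisely when (ii)/(iii) hold.

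The main obstacle I anticipate is the combinatorial equivalence inside (i)$\iff$(ii): identifying $\xtilde$ explicitly in both directions and verifying the conjugation identity, where the symmetric case $K=J^{\rm opp}$ and the behaviour of the double coset of $w_0$ must be controlled carefully. A secondary subtlety is the last identification $b_\mu=b_{\rm ord}\iff E_v=\QQ_p$, which rests on the known structure of the $\mu$-ordinary $p$-divisible group rather than on an elementary computation, and so I would present it as a consequence of the cited $\mu$-ordinary theory.
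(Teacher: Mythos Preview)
Your proposal is correct and follows the same route as the paper: reduce (i)$\iff$(ii) to Corollary~\ref{SingleEO} and the identity $K=\bar\varphi(J)^{\rm opp}$, and handle (iii)$\iff$(iv)$\iff$(v) via the $\mu$-ordinary theory. The paper is terser: it simply writes ``$K=\xtilde^{-1}J\xtilde=J^{\rm opp}$'' and cites \cite{Wd_Ord}~(1.6.3) for the last three equivalences, whereas you spell out both directions of the combinatorial equivalence $K=\xtilde^{-1}J\xtilde\iff K=J^{\rm opp}$ (your computation that $\xtilde=w_{0,J}w_0$ when $K=\xtilde^{-1}J\xtilde$, and $\xtilde=w_0w_{0,K}$ when $K=J^{\rm opp}$, is exactly the content the paper suppresses) and replace the citation by a Newton-point sketch.
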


\begin{proof}
Assertion~(ii) means that $\kappa(J) = \FF_p$ and thus is equivalent to~(iii). The equivalence of~(iii),~(iv) and~(v) is \cite{Wd_Ord}~(1.6.3).  Thus it remains to show that~(ii) is equivalent to the equality of $\AStrat{\xtilde}$ and the $\mu$-ordinary Ekedahl-Oort stratum (by the above mentioned result of Moonen). By Corollary~\ref{SingleEO} this equality holds if and only if $K = \xtilde^{-1}J\xtilde = J^{\rm opp}$. But by definition $K = \bar\varphi(J)^{\rm opp}$. This shows the equivalence of~(i) and~(ii).
\end{proof}

\begin{example}
The equivalent conditions of Proposition~\ref{MuOrdAStrat} are satisfied in the following cases.
\begin{assertionlist}
\item
All connected components of the Dynkin diagram of $G$ are of Dynkin type C.
\item
$G$ is split.
\end{assertionlist}
\end{example}


\section{Properties of the Bruhat strata}\label{PROP}

\begin{theorem}\label{ASmooth}
The morphism $a$~\eqref{EqAMorphism} is smooth and surjective.
\end{theorem}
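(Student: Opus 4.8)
The plan is to prove smoothness and surjectivity after replacing $a$ by the $G_\kappa$-equivariant morphism $a^{\#}\colon \Ascr_0^{\#} \to \Par_J \times \Par_K$ from which it is induced. Since $\Par_J \times \Par_K \to \Bcal_{J,K}$ is the quotient $G_\kappa$-torsor and $\Ascr_0^{\#} \to \Ascr_0$ is a $G_\kappa$-torsor (Proposition~\ref{StackDMod}), the square relating $a^{\#}$ and $a$ is cartesian; as both properties are fppf-local on the target, it suffices to treat $a^{\#}$. Everything in sight is locally of finite presentation over $\kappa$, so smoothness can be checked by the infinitesimal criterion: given a square-zero thickening $S \mono S'$ of $\kappa$-schemes with ideal $\Ical$, a point $(A,\iota,\lambda,\eta,\alpha)$ of $\Ascr_0^{\#}$ over $S$, and a lift of its image $a^{\#}(A,\iota,\lambda,\eta,\alpha) = (P,Q)$ to a pair $(\tilde C, \tilde D)$ of filtrations over $S'$ of the prescribed types, I must produce a deformation of the point realizing exactly $(\tilde C,\tilde D)$.

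The first input is Grothendieck--Messing theory in the PEL setting. Writing $M_{S'} := \DD(A[p])_{S'}$ for the value of the covariant Dieudonn\'e crystal on the thickening---a canonical locally free lift of $M(A) = H_1^{\DR}(A/S)$---deformations of $(A,\iota,\lambda,\eta)$ to $S'$ correspond bijectively to lifts of the Hodge filtration $C = \omega_{A\vdual}$ to a totally isotropic, $O_\BB$-stable local direct summand $\tilde C \subset M_{S'}$ of type $J$; and because $\Gtilde$ is smooth, the trivialization $\alpha$ then lifts, its lifts $\alpha'$ forming a torsor under $\Lie(G) \otimes_{\Oscr_S} \Ical$. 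Thus $\tilde C$ may be prescribed freely; the real point is that, after fixing the deformation $A'$ it determines, the conjugate filtration must still be made to match the prescribed $\tilde D$ by a suitable choice of $\alpha'$.

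The decisive structural fact is the Frobenius-rigidity of the conjugate filtration, which I would extract from the $F$-zip structure on the de Rham homology. Since $\Ical^2 = 0$ in characteristic $p$, the Frobenius-twisted module $M^{(p)}_{S'}$ depends only on the reduction $M(A)$; consequently $D = \im(F\colon M^{(p)} \to M)$ does not move to first order when one deforms $A$ with $\alpha$ held fixed, whereas varying $\alpha$ by $\xi \in \Lie(G)$ moves $P = \Stab(\alpha C)$ by $\xi \bmod \Lie(P)$ and $Q = \Stab(\alpha D)$ by $\xi \bmod \Lie(Q)$. Hence the differential of $a^{\#}$ is block-triangular for the decomposition $T\Ascr_0^{\#} = T\Ascr_0 \oplus \Lie(G)$ and the target $(\Lie(G)/\Lie(P)) \oplus (\Lie(G)/\Lie(Q))$: the Kodaira--Spencer/Hodge map $T\Ascr_0 \iso \Lie(G)/\Lie(P)$ is an isomorphism (this is why $\dim \Ascr_0 = \dim \Par_J$), the $\Lie(G)$-direction projects onto each factor, and the off-diagonal conjugate-versus-deformation term vanishes by rigidity. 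Solving for the $Q$-component first and then correcting the $P$-component via the Hodge isomorphism shows $da^{\#}$ is surjective with kernel $\cong \Lie(Q)$. Thus $a^{\#}$ is smooth of constant relative dimension $\dim(G/P_J) + \dim L_J = \dim P_J$, in agreement with $\dim\Ascr_0 - \dim\Bcal_{J,K}$.

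For surjectivity I would invoke the factorization recorded at the start of Section~\ref{BRUHAT}: $a = \bar\pi \circ \zeta$, where $\zeta$ maps $\Ascr_0$ to the stack of $G$-zips of type $\mu$ and is surjective by \cite{ViWd}, while the canonical morphism $\bar\pi$ to $\Bcal_{J,K}$ induces on points the surjection $\pi\colon \leftexp{J}{W} \to \doubleexp{J}{W}{K}$ used in the proof of Proposition~\ref{EOaStrata}; hence $a$ is surjective. (Alternatively, smoothness itself follows from this factorization, $\zeta$ being smooth by \cite{ViWd} and $\bar\pi$ being smooth by \cite{Wd_Bruhat}, as it is induced by the inclusion of the zip group, which has smooth quotient.) I expect the main obstacle to be exactly the Frobenius-rigidity step that decouples the two filtrations in the differential---equivalently, the smoothness of the canonical morphism from $G$-zips to the Bruhat stack---whereas the Grothendieck--Messing identification and the bookkeeping of the torsor directions are routine.
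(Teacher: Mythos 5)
Your surjectivity argument (factor $a$ through the stack of $G$-zips of type $\mu$ and combine the surjectivity of $\zeta$ from \cite{ViWd}~Theorem~9.1 with the surjectivity of $\pi\colon \leftexp{J}{W} \to \doubleexp{J}{W}{K}$) is exactly the paper's. For smoothness you take a genuinely different route: the paper also reduces to surjectivity on tangent spaces at algebraically closed points, but then works with Zink--Lau Dieudonn\'e displays over the Zink ring $\WW(k[\eps])$, explicitly constructing a display $(\Ptilde,\Qtilde,\Ftilde,\Ftilde_1)$ with PEL structure that lifts the given point and realizes the prescribed pair $(\Ctilde,\Dtilde)$; the two key inputs there are Lemma~\ref{OpenBruhat} (deforming $g$ to $\gtilde$ so that $\gtilde^{-1}(\Dtilde)$ is a complement of $\Ctilde^{(p)}$) and \cite{ViWd}~Lemma~3.1 applied to the smooth scheme parametrizing triples $(C',T',g)$. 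You instead use Grothendieck--Messing theory together with a block-triangular analysis of $da^{\#}$, whose decisive input is the rigidity of the conjugate filtration over a square-zero thickening; this plays for you exactly the role that the choice of $\gtilde$ via Lemma~\ref{OpenBruhat} plays in the paper, and it renders that lemma unnecessary. One step of yours is under-justified as stated: the factorization of Frobenius through the reduction only makes the \emph{source} $M^{(p)}_{S'} \cong f^*M(A)$ canonical; to conclude that $D$ does not move you also need that the map $F$ itself is independent of the chosen deformation, which holds because $F$ is a morphism of crystals attached to $A/S$ (so its evaluation on the thickening $S \mono S'$ is intrinsic) and is compatible, under the canonical isomorphism of the crystal value with $H_1^{\DR}(A'/S')$, with the Verschiebung-induced map of the lift (cf.\ \cite{BBM_DieudII}). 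With that supplied, your tangent computation (Kodaira--Spencer isomorphism onto $\Lie(G)/\Lie(P)$, the $\Lie(G)$-direction of $\alpha$ hitting both factors, kernel $\cong\Lie(Q)$, relative dimension $\dim P_J = \dim\Ascr_0 - \dim\Bcal_{J,K}$) is sound. What each approach buys: yours isolates the geometric mechanism (with the crystalline trivialization fixed, only the Hodge filtration moves to first order, while varying $\alpha$ moves $(P,Q)$ diagonally), at the cost of invoking crystalline theory (note that for $p=2$ Grothendieck--Messing requires nilpotent divided powers, which the trivial PD structure on a square-zero ideal does satisfy); the paper's display computation stays in the framework in which $\zeta$ was constructed in \cite{ViWd} and lifts the entire display rather than a tangent vector. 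Your parenthetical alternative---smoothness of $\zeta$ composed with smoothness of the canonical morphism to $\Bcal_{J,K}$---is also viable, provided you verify that the latter smoothness is actually available in the references in the form you need.
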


Before giving the proof of this theorem we deduce some properties of the geometric Bruhat strata. For this we first introduce the following notation. For $x \in \doubleexp{J}{W}{K}$ let $x^{J,K}$ be the element of maximal length in $\leftexp{J}{W} \cap W_JxW_K$. Then $x^{J,K} = xx_{J,K}$, where $x_{J,K}$ is the element of maximal length in $\leftexp{K \cap x^{-1}Jx}{W_K}$ and we have
\begin{equation}\label{DefEllJK}
\ell(x^{J,K}) = \ell_{J,K}(x) = \ell(x) + \ell(x_{J,K})
\end{equation}
by a result of Howlett (\cite{PWZ1}~2.7 and~2.8).

\begin{corollary}\label{AStratProp}
Let $x \in \doubleexp{J}{W}{K}$.
\begin{assertionlist}
\item\label{AStratProp1}
The corresponding geometric Bruhat stratum $\AStrat{x}$ is smooth of pure dimension $\ell(x^{J,K})$~\eqref{DefEllJK}. In particular, all Bruhat strata are non-empty.
\item\label{AStratProp2}
The closure of $\AStrat{x}$ is given by
\[
\overline{\AStrat{x}} = \bigcup_{x' \leq x}\AStrat{x'},
\]
where $\leq$ denotes the Bruhat order on $\doubleexp{J}{W}{K}$.
\end{assertionlist}
\end{corollary}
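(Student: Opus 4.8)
\noindent The plan is to deduce both assertions formally from Theorem~\ref{ASmooth}, viewing $\AStrat{x}$ via~\eqref{DefGeomBruhat} as the pullback along the smooth surjective morphism $a$ of the residue gerbe $\Gscr_x \subseteq \Bcal_{J,K}\otimes_{\kappa}k$, and transporting the geometry of $\Bcal_{J,K}$ recorded in the three assertions at the beginning of Section~\ref{BRUHAT}. For~\ref{AStratProp1}, note that $\AStrat{x} = a^{-1}(\Gscr_x\otimes_{\kappa(x)}k) = (\Ascr_0\otimes_\kappa k)\times_{\Bcal_{J,K}\otimes_\kappa k}(\Gscr_x\otimes_{\kappa(x)}k)$, so the projection $\AStrat{x}\to\Gscr_x\otimes k$ is a base change of $a$ and hence smooth. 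Since $\Gscr_x$ is smooth and irreducible (it is a single residue gerbe), the stack $\AStrat{x}$ is smooth and of pure dimension equal to $\dim\Gscr_x$ plus the relative dimension of $a$; surjectivity of $a$ gives $\AStrat{x}\neq\emptyset$.

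\noindent For the dimension I would compute the relative dimension of $a$ as $\dim(\Ascr_0)-\dim(\Bcal_{J,K})$. Recalling that $\dim\Ascr_0 = \dim(G/P_J) = \dim G - \dim P_J$ (as $P_J$ is the parabolic of type $J$ attached to $\mu$) and that $\dim\Bcal_{J,K} = \dim G - \dim P_J - \dim P_K$, this relative dimension equals $\dim P_K$. Combining with $\dim\Gscr_x = \ell_{J,K}(x) - \dim P_K$ then yields $\dim\AStrat{x} = \ell_{J,K}(x) = \ell(x^{J,K})$ by~\eqref{DefEllJK}. In the Siegel case ($J=K$, $\dim\Ascr_0 = g(g+1)/2$) this recovers the values $g(g+1)/2$ for the maximal stratum and $0$ for the superspecial stratum, as a consistency check.

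\noindent For~\ref{AStratProp2} I would pass to underlying topological spaces. By the description of $|\Bcal_{J,K}\otimes_\kappa k| = \doubleexp{J}{W}{K}$ recalled in Section~\ref{BRUHAT}, the open sets are the upward-closed subsets, so $\overline{\{x\}} = \{x' : x'\leq x\}$ and therefore $a^{-1}(\overline{\{x\}}) = \bigcup_{x'\leq x}\AStrat{x'}$. Since $a$ is continuous and $a^{-1}(\overline{\{x\}})$ is closed and contains $\AStrat{x}$, one inclusion $\overline{\AStrat{x}}\subseteq\bigcup_{x'\leq x}\AStrat{x'}$ is immediate. For the reverse inclusion I would use that a smooth morphism is open: fix $x'\leq x$, a point $s'$ of $\AStrat{x'}$, and an open neighborhood $U$ of $s'$. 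Then $a(U)$ is open and contains $a(s')=x'$; being upward-closed and containing $x'\leq x$, it also contains $x$, so some point of $U$ maps to $x$, i.e.\ lies in $\AStrat{x}$. As every neighborhood of $s'$ meets $\AStrat{x}$, we get $s'\in\overline{\AStrat{x}}$, whence $\AStrat{x'}\subseteq\overline{\AStrat{x}}$ for all $x'\leq x$.

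\noindent Granting Theorem~\ref{ASmooth}, there is no further geometric input; the content is the dimension bookkeeping and the openness argument above. The one point demanding care --- the main, though modest, obstacle --- is that everything takes place for algebraic stacks rather than schemes: that the pullback of a smooth substack along $a$ is smooth of the expected pure dimension, and that the map on underlying spaces induced by the smooth morphism $a$ is open, should both be verified on a smooth presentation of $a$, where they reduce to the corresponding standard facts for schemes.
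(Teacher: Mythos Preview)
Your proof is correct and follows essentially the same route as the paper: both deduce everything from the smoothness and surjectivity of $a$ via the cartesian square $\AStrat{x} = a^{-1}(\Gscr_x)$, with the only cosmetic difference that the paper phrases the dimension count as ``$a$ preserves codimension'' while you compute the relative dimension of $a$ directly, and the paper compresses your openness argument for~\ref{AStratProp2} into the one-line remark that for open morphisms taking closures commutes with taking inverse images.
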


\begin{proof}
As the morphism $a$ is surjective, all Bruhat strata are non-empty. Let $\Gscr_x \subset \Bcal_{J,K} \otimes \kappa(x)$ be the residue gerbe of $x$. Then the cartesian diagram
\[\xymatrix{
\AStrat{x} \ar[r] \ar[d]^{\leftexp{x}{a}} & \Ascr_0 \otimes \kappa(x) \ar[d]^{a_{\kappa(x)}} \\
\Gscr_x \ar[r] & \Bcal_{J,K} \otimes \kappa(x)
}\]
shows that the smoothness of $a$ implies the smoothness of $\leftexp{x}{a}$. As $\Gscr_x$ is smooth over $\kappa(x)$, $\AStrat{x}$ is smooth over $\kappa(x)$.

Finally, as $a$ is smooth, it preserves codimension. By \cite{Wd_Bruhat}~Proposition~1.12, $\Gscr_x$ has in $\Bcal_{J,K} \otimes \kappa(x)$ pure codimension $\dim G - \dim P_J - \ell(x^{J,K})$. As $\dim\Ascr_0 = \dim G - \dim P_J$, we obtain that $\AStrat{x}$ is of pure dimension $\ell(x^{J,K})$.

As $a$ is smooth, it is an open morphism. Hence~(2) follows from the description of the underlying topological space of $\Bcal_{J,K}$ (\cite{Wd_Bruhat}~Proposition~1.10) and the fact that for open morphisms taking closures commutes with taking inverse images. 
\end{proof}

\begin{proof}[Proof of Theorem~\ref{ASmooth}]
The morphism $a$ is the composition of the morphism $\zeta$, which attaches to every point of $\Ascr_0$ its $G$-zip of type $\mu$, and the morphism from the stack of $G$-zips of type $\mu$ to the Bruhat stack $\Bcal_{J,K}$. This morphism is surjective by Theorem~9.1 of~\cite{ViWd}. As explained in the proof of Proposition~\ref{EOaStrata}, the morphism from the stack of $G$-zips of type $\mu$ to the Bruhat stack $\Bcal_{J,K}$ is also surjective. Therefore $a$ is surjective.

Moreover, source and target of $a$ are both smooth over $\Spec \kappa$. Therefore it suffices to show that $a$ is surjective on tangent spaces. By this we mean the following. Let $k$ be an algebraically closed extension of $\kappa$, let $k[\eps]$ be the ring of dual numbers over $k$, and let
\begin{equation}\label{EqTangent}
\begin{aligned}\xymatrix{
\Spec k \ar[r]^x \ar[d] & \Ascr_0 \ar[d]^a \\
\Spec k[\eps] \ar[r]^{\xtilde} & \Bcal_{J,K}
}\end{aligned}
\end{equation}
be a 2-commutative diagram. Then we have to show that there exists a morphism $\Spec k[\eps] \to \Ascr_0$ which 2-commutes with~\eqref{EqTangent}.

For this we use Dieudonn\'e displays defined by Zink and Lau (\cite{Zink_Dieudonne}, \cite{Lau_Duality}, \cite{Lau_DieudonneDisplays}; see also \cite{ViWd}~3.1 for a short reminder). We denote the Zink rings of $k$ and $k[\eps]$ by $\WW(k)$ and $\WW(k[\eps])$, respectively. Then $\WW(k)$ is equal to the Witt ring $W(k)$ and there is a surjective ring homomorphism $\WW(k[\eps]) \to \WW(k)$ whose kernel consists of nilpotent elements. Let $\II_{k} = pW(k)$ (resp.~$\II_{k[\eps]}$) be the kernel of $\WW(k) \to k$ (resp.~of $\WW(k[\eps]) \to k[\eps]$). We denote the Frobenius on the Zink ring by $\sigma$ and the pull back via $\sigma$ by $(\ )^{(\sigma)}$.

The $k$-valued point $x$ is given by a tuple $(A,\lambda,\iota,\eta)$. Let $(P,Q,F,F_1)$ be the Dieudonn\'e display attached to its $p$-divisible group. It is endowed with a similitude class of perfect alternating forms $\lrangle$ induced by $\lambda$ and an $O_B$-action induced by $\iota$. In other words, $P$ is a $\Dscr$-module over $\WW(k)$. As $W(k)$ is strictly henselian, we may choose by Proposition~\ref{StackDMod} an isomorphism of $P$ with the $\Dscr$-module $\Lambda \otimes_{\ZZ_p} \WW(k)$. This defines in particular a $\ZZ_p$-rational structure on the $\Dscr$-module $P$ and an identification $P/\II_kP = \Lambda_k$.

The Hodge filtration is given by $C = Q/\II_kP \subset H^{\rm DR}_1(A/k) = P/\II_kP$. Choose a totally isotropic $O_B$-invariant direct summand $C'$ of $P$ lifting $C$ and let $T' \subset P^{(\sigma)} = P$ be an $O_B$-invariant totally isotropic complement of $(C')^{(\sigma)}$. Then the composition
\[
g\colon P \cong P^{(\sigma)} = (C')^{(\sigma)} \oplus T' \vartoover{35}{F_1\rstr{(C')^{(\sigma)}} \oplus F\rstr{T'}} P
\]
is an automorphism of $\Dscr$-modules, i.e., $g \in \Gtilde(\WW(k))$. The image of $g(T')$ in $P/\II_kP$ is the conjugate filtration $D$.

The morphism $\xtilde \colon \Spec k[\eps] \to \Bcal_{J,K}$ then corresponds to totally isotropic $O_B$-invariant direct summands $\Ctilde$ and $\Dtilde$ of $\Lambda_{k[\eps]}$ lifting $C$ and $D$, respectively. Applying Lemma~\ref{OpenBruhat} below to the stabilizers of $\Dtilde$ and $\Ctilde^{(p)}$, there exists $\gtilde \in G(k[\eps])$ with $\gtilde \equiv g \mod \eps$ such that $\Ttilde := \gtilde^{-1}(\Dtilde)$ is a complement of $\Ctilde^{(p)}$. As explained in~\cite{ViWd}~Section~4.1, triples $(C',T',g)$ as above are parametrized by a smooth scheme and thus we can apply \cite{ViWd}~Lemma~3.1 to see that there exists a totally isotropic $O_B$-invariant direct summand $\Ctilde'$ of $\Lambda_{\WW(k[\eps])}$, a totally isotropic $O_B$-invariant complement $\Ttilde'$ of $(\Ctilde')^{(\sigma)}$ and $\gtilde' \in \Gtilde(\WW(k[\eps]))$ whose pullback to $\WW(k)$ is $(C',T',g)$ and whose pullback to $k[\eps]$ is $(\Ctilde,\Ttilde,\gtilde)$. We obtain a Dieudonn\'e display $(\Ptilde,\Qtilde,\Ftilde,\Ftilde_1)$ as follows. We define $\Ptilde := \Lambda_{\WW(k[\eps])}$ and denote by $\Qtilde$ the inverse image of $\Ctilde$ under $\Ptilde \to \Ptilde/\II_{k[\eps]}\Ptilde = \Lambda_{k[\eps]}$. Finally let $\Ftilde\colon \Ptilde^{(\sigma)} \to \Ptilde$ and $\Ftilde_{1}\colon \Qtilde^{(\sigma)} \to \Ptilde$ be the unique $\WW(k[\eps])$-linear maps making $(\Ptilde,\Qtilde,\Ftilde,\Ftilde_1)$ into a Dieudonn\'e display such that the direct sum of the restriction of $\Ftilde_1$ to $(\Ctilde')^{(\sigma)}$ and the restriction of $\Ftilde$ to $\Ttilde'$ is given by $\gtilde$. Then $(\Ptilde,\Qtilde,\Ftilde,\Ftilde_1)$ is a Dieudonn\'e display with symplectic form and $O_B$-action lifting $(P,Q,F,F_1)$ and thus defining a morphism $\Spec k[\eps] \to \Ascr_0$ which 2-commutes with~\eqref{EqTangent}.
\end{proof}

\begin{example}\label{BruhatSiegel}
We consider again the Siegel case, i.e., $\BB = \QQ$ and therefore $\Gtilde = \GSp(\Lambda,\lrangle)$. Set $g := \dim_\QQ(V)/2$. We have seen in Example~\ref{Siegel} that the Bruhat stratification on $\Ascr(\kgbar)$ can also be described by
\[
\Ascr(\kgbar) = \bigcup_{0\leq i \leq m}\Acal_i,
\]
where $\Acal_i$ denotes the locally closed subvariety of $\Ascr(\kgbar)$ consisting of principally polarized abelian varieties whose $a$-number is equal to $i$. By Corollary~\ref{AStratProp} we have
\[
\overline{\Acal_i} = \bigcup_{j \geq i}\Acal_j
\]
and $\Acal_i$ is smooth and equi-dimensional of dimension $d(i)$ with
\[
d(i) := \sum_{j=i+1}^gj = \frac{g(g+1) - i(i+1)}{2}.
\]
This explicit description of $\ell(x^{J,J})$ for $x \in \doubleexp{J}{W}{J} \cong \{0,\dots,g\}$ follows from the explicit calculation of the length in \cite{ViWd}~Appendix~A.7, in particular the formulas (12.3) and (12.4) there. Note that for the identification of $\doubleexp{J}{W}{J}$ and $\{0,\dots,g\}$ made here in Example~\ref{Siegel}, the map in loc.~cit.~(12.4) should be given by $(\eps_i)_i \sends \#\set{i}{\eps_i = 0}$.
\end{example}


\section{Deforming parabolics in opposite position}\label{PAROPP}

Here we prove a group-theoretical lemma used in the proof of Theorem~\ref{ASmooth}. Let $k$ be any field, and let $G$ be a reductive group over $k$. Let $(W,I)$ be the Weyl group of $G$ together with its set of simple reflections. Recall that for any $k$-scheme $S$, two parabolic subgroups $P$ and $Q$ of $G_S$ are called \emph{opposite} if $P \cap Q$ is a Levi subgroup of $P$ and of $Q$. We indicate this by writing $P \bowtie Q$. Moreover for every Levi subgroup $L$ of a parabolic subgroup $P$ there exists a unique parabolic subgroup $Q$ of $G_S$ such that $P \cap Q = L$ and then $P \bowtie Q$ (\cite{SGA3}~Exp.XXVI~Th\'eor\`eme~4.3.2). This implies that if $J \subseteq I$ is the type of $P$, the type $J^{\rm opp}$ of a parabolic subgroup $Q$ opposite to $P$ depends only on $J$.

\begin{lemma}\label{OpenBruhat}
Let $Z_J$ be the $k$-scheme whose $S$-valued points are the set of triples $(P,Q,g)$, where $P$ is a parabolic subgroup of $G_S$ of type $J$, $Q$ is a parabolic subgroup of $G_S$ of type $J^{\rm opp}$, and $g \in G(S)$ such that $\leftexp{g}{Q} \bowtie P$. Then the canonical morphism
\[
Z_J \to \Par_J \times \Par_{J^{\rm opp}}, \qquad (P,Q,g) \sends (P,Q)
\]
is smooth.
\end{lemma}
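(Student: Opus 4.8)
The plan is to prove smoothness by the infinitesimal lifting criterion, reducing everything to deforming a parabolic in a position opposite to a fixed one. Since $Z_J$ and $\Par_J \times \Par_{J^{\rm opp}}$ are both smooth of finite type over $k$ (the former being, fiberwise over the target, an open subscheme of $G$, as explained below), it suffices to check formal smoothness: given an Artinian local $k$-algebra $A$ with square-zero ideal $\mathfrak a$ (so $A \to A_0 := A/\mathfrak a$), a point $(P_A, Q_A) \in (\Par_J \times \Par_{J^{\rm opp}})(A)$, and a lift $(P_0, Q_0, g_0) \in Z_J(A_0)$ of its reduction, one must produce $g_A \in G(A)$ lifting $g_0$ with $\leftexp{g_A}{Q_A} \bowtie P_A$. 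Here $P_A, Q_A$ already lift $P_0, Q_0$ by hypothesis, so the only thing to lift is the element $g_0 \in G(A_0)$ to some $g_A \in G(A)$ for which the opposition condition holds.

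The key structural observation is that the opposition condition is open. For a fixed parabolic $P$ of type $J$ in $G_S$, the parabolics $Q'$ of type $J^{\rm opp}$ with $Q' \bowtie P$ form an open subscheme of $\Par_{J^{\rm opp}}$ (this is the standard fact that ``being opposite'' is an open condition, as $P \cap Q'$ being a Levi is detected by the dimension of the intersection, which is upper semicontinuous; see the discussion preceding the lemma together with \cite{SGA3}~Exp.~XXVI). Consequently the morphism $Z_J \to \Par_J \times \Par_{J^{\rm opp}}$ realizes $Z_J$, locally over the base, as the preimage under the action map $(g, Q) \mapsto \leftexp{g}{Q}$ of this open opposition locus; concretely, over a point $(P,Q)$ the fiber consists of those $g$ with $\leftexp{g}{Q}$ opposite to $P$, which is an open subscheme of $G_S$. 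Since $G$ is smooth, $Z_J$ is smooth over $\Par_J \times \Par_{J^{\rm opp}}$ once one knows this fiberwise description patches, i.e.\ once one knows the opposition condition cuts out an \emph{open} (hence smooth, as $G$ is smooth) subscheme of $G \times_k (\Par_J \times \Par_{J^{\rm opp}})$.

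Carrying out the lifting criterion then becomes routine. First I would lift $g_0 \in G(A_0)$ arbitrarily to some $g_1 \in G(A)$, which is possible because $G$ is smooth over $k$ and $A \to A_0$ is a square-zero thickening. Then $\leftexp{g_1}{Q_A}$ is a parabolic of type $J^{\rm opp}$ in $G_A$ whose reduction $\leftexp{g_0}{Q_0}$ is opposite to $P_0$. By the openness of the opposition locus just discussed, the condition $\leftexp{g_1}{Q_A} \bowtie P_A$ is then automatically satisfied, since opposition is detected on the closed fiber: a pair of parabolics over $A$ is opposite if and only if its reduction mod $\mathfrak a$ is opposite (the intersection $\leftexp{g_1}{Q_A} \cap P_A$ is a closed subscheme whose formation commutes with base change, and it is a Levi iff its reduction is, both being smooth of the predicted relative dimension). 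Thus $g_A := g_1$ already works, and no correction is needed.

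The main obstacle is therefore not the lifting step itself but establishing cleanly that opposition is an open condition that is insensitive to nilpotent thickenings, and packaging this so that $Z_J$ is visibly smooth over the target. The cleanest route is to appeal directly to \cite{SGA3}~Exp.~XXVI~Th\'eor\`eme~4.3.2: for each Levi $L$ of $P$ there is a \emph{unique} opposite parabolic, and the scheme of parabolics opposite to a given $P$ is identified with an affine space (a torsor under the unipotent radical of $P$), hence smooth. Pulling this back along the action map $g \mapsto \leftexp{g}{Q}$ and using smoothness of $G$ gives the smoothness of $Z_J \to \Par_J \times \Par_{J^{\rm opp}}$ directly, bypassing the infinitesimal criterion altogether; I would present the openness/uniqueness input as the conceptual heart and treat the rest as formal.
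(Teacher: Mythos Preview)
Your proposal is correct, and in its final paragraph you arrive at exactly the paper's argument: identify $Z_J$ as an open subscheme of $G_X$ over $X = \Par_J \times \Par_{J^{\rm opp}}$ via the openness of the opposition condition (\cite{SGA3}~Exp.~XXVI~Th\'eor\`eme~4.3.2), and conclude smoothness from the smoothness of $G_X \to X$. The detour through the infinitesimal lifting criterion in the earlier paragraphs is unnecessary once you have this openness statement, since ``open in a smooth $X$-scheme'' already gives smoothness over $X$; the paper simply states the identification with an open subscheme of $G_X$ and stops there.
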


\begin{proof}
We set $X := \Par_J \times \Par_{J^{\rm opp}}$. Let $\Pcal_J \subset G_{\Par_J}$ be the universal parabolic subgroup of $G_{\Par_J}$ of type $J$ and let $\Pcal'_J$ be its pull back to $X$ under the first projection. This is a parabolic subgroup of $G_X$. Define similarly a parabolic subgroup $\Pcal'_{J^{\rm opp}}$ of $G_X$. Then the $X$-scheme $Z_J$ is isomorphic to the subscheme of $G_X$ whose $S$-valued points are given by $\set{g \in G_X(S)}{\leftexp{g}{\Pcal'_{J^{\rm opp}}} \bowtie \Pcal'_J}$. This is an open subscheme of $G_X$ by \cite{SGA3}~Exp.XXVI~Th\'eor\`eme~4.3.2. As $G_X$ is smooth over $X$, this implies that $Z_J$ is smooth over $X$.
\end{proof}


\bibliography{references}

\begin{thebibliography}{BouLie}
\bibitem[BBM]{BBM_DieudII} P.~Berthelot, L.~Breen, W.~Messing: {\em Th\'eorie de Dieudonn\'e cristalline II}, LNM {\bf 930}, Springer-Verlag (1982). 
\bibitem[Kot]{Ko_ShFin} R.~E.~Kottwitz, {\em Points on some Shimura varieties over finite
fields}, J. AMS {\bf5} (1992), 373--444.
\bibitem[Lan]{Lan} K.-W.~Lan, {\em Arithmetic compactification of PEL type Shimura varieties}, PhD thesis, Harvard 2008, revised version July 2010, to appear in London Mathematical Society Monographs.
\bibitem[Lau1]{Lau_Duality} E. Lau, {\em A duality theorem for Dieudonn\'e displays}, Ann. Sci. \'Ec. Norm. Sup. (4) {\bf42} (2009), 241--259.
\bibitem[Lau2]{Lau_DieudonneDisplays} E. Lau, {\em A relation between Dieudonn\'e displays and crystalline Dieudonn\'e theory}, arXiv:1006.2720v1.
\bibitem[Moo]{Mo_SerreTate} B.~Moonen, {\em Serre-Tate theory for moduli spaces of PEL type}, Ann.\ Sci.\ \'Ec. Norm. Sup. (4) {\bf37} (2004), 223--269.
\bibitem[Od]{Oda} T. Oda, {\em The first de Rham cohomology group and Dieudonn\'e modules}, Ann. Sci. de l'ENS. {\bf2} (1) (1969), 63--135.
\bibitem[Oo]{Oort3} F.~Oort, {\it Newton polygon strata in the moduli space of abelian varieties}, in ``Moduli of Abelian Varieties'' (ed. by
C. Faber, G. van der Geer, F. Oort), Progress in Mathematics {\bf 195},
Birkh\"auser (2001),  417--440.
\bibitem[PWZ1]{PWZ1} R.~Pink, T.~Wedhorn, P.~Ziegler, {\em Algebraic Zip data}, Documenta Math. {\bf 16} (2011), 253 -- 300.
\bibitem[PWZ2]{PWZ2} R.~Pink, T.~Wedhorn, P.~Ziegler, {\em $F$-zips with additional structure}, preprint August 2012, 49 pages, arXiv:1208.3547
\bibitem[RaZi]{RZ_Period} M.~Rapoport, T.~Zink, \emph{Period spaces for $p$-divisible groups}, Annals of Math.\ Studies {\bf 141}, Princeton Univ.\ Press (1996).
\bibitem[SGA3]{SGA3} A.~Grothendieck et al., {\it Sch\'emas en groupes}, LNM {\bf 151}, {\bf 152}, {\bf 153}, Springer 1970.
\bibitem[vdG]{vdG_CyclesAV} G. van der Geer, \emph{Cycles on the moduli space of abelian varieties}, Moduli of curves and abelian varieties, Aspects Math., E33, Vieweg, Braunschweig (1999), 65--89.
\bibitem[ViWd]{ViWd} E.~Viehmann, T.~Wedhorn, \emph{Ekedahl-Oort and Newton strata for Shimura varieties of PEL type}, Math.\ Ann.\ {\bf356} (4) (2013), 1493--1550.
\bibitem[Wd1]{Wd_Ord} T.~Wedhorn, {\it Ordinariness in good reductions of Shimura varieties of PEL-type}, Ann.\ sci.\ \'Ec.\ Norm.\ Sup.\ (4), {\bf 32}, (1999), 575--618.
\bibitem[Wd2]{Wd_Bruhat} T.~Wedhorn, \emph{Bruhat strata and $F$-zips with additional structures}, preprint 2013, 31 pages, arXiv:1302.6715, to appear in M\"unster Journal of Mathematics.
\bibitem[Wo]{Wortmann} D.~Wortmann, {\it The $\mu$-ordinary locus for Shimura varieties of Hodge type}, preprint 2013, 34 pages, arXiv:1310.6444.
\bibitem[Zi]{Zink_Dieudonne} T.~Zink, {\it A Dieudonn\'e Theory for $p$-divisible groups}, in: Class Field Theory, Its Centenary and Prospect pp.~1--22, Advanced Studies in Pure Mathematics, Tokyo 2000.
\end{thebibliography}

\end{document}